\documentclass[11pt]{article} 
\usepackage{amssymb, latexsym, amsmath,xcolor, amsthm}
\usepackage{authblk}
\newtheorem{theorem}{Theorem}[section]
\newtheorem{lemma}[theorem]{Lemma}

\newtheorem{remark}[theorem]{Remark}
\theoremstyle{definition}
\newtheorem{example}{Example}[section]

\newcommand{\ra}{\rightarrow}

\newcommand{\ms}{\mapsto}
\newcommand{\ol}{\overline}

\newcommand{\N}{\mathbb N}

\newcommand{\Z}{\mathbb Z}
\newcommand{\Q}{\mathbb Q}
\renewcommand{\P}{\mathcal P}

\newenvironment{items}{\begin{list}{$\alph{item})$}
{\labelwidth18pt \leftmargin18pt \topsep3pt \itemsep1pt \parsep0pt}}
{\end{list}}

\author{
Bettina Eick\thanks{Institut für Analysis und Algebra,
Technische Universität Braunschweig,
38106 Braunschweig, Germany.
E-mail: \texttt{b.eick@tu-braunschweig.de}} \; and 
Renu Joshi \thanks{E-mail: \texttt{renu.joshi@tu-braunschweig.de}}
}

\begin{document}
\title{Derivations of Lie $p$-rings with applications \\
   on ring extensions and computational methods}
   \date{\today}

\maketitle
\begin{abstract}
Derivations are a well-known concept in the theory of Lie algebras. Here we 
investigate them for Lie $p$-rings. We show how they apply in the 
construction of cyclic extensions and in computational methods; in 
particular, we demonstrate that a consistency test for Lie $p$-presentations 
can be facilitated using derivations. The consistency test generalizes in
specific cases to parametrised Lie $p$-presentations and, further, to the
construction of Lie $p$-covers. These provide a first building block towards 
a symbolic Lie $p$-ring generation algorithm. 
\end{abstract}

\noindent\textbf{2020 Mathematics Subject Classification.} 20D15, 20F40

\noindent\textbf{Keywords.} Lie rings; Derivations; Consistency

\section{Introduction}

Given a Lie algebra $L$, the set of its derivations $\mathrm{Der}(L)$ forms 
a Lie algebra containing the inner derivations $\mathrm{Inn}(L)$ as an 
ideal. Derivations of a Lie algebra have applications in the construction of 
extensions and general investigations of Lie algebras. Their role is 
similar to that of automorphism groups in group theory. We refer to the 
book by de Graaf \cite{Gr00} for details.

A Lie $p$-ring is a nilpotent Lie ring of $p$-power order. The construction 
of the derivations $\mathrm{Der}(L)$ extends naturally to a Lie $p$-rings. 
In Section \ref{deriv}, we define and investigate these derivations, and 
show how they can be used to construct {\em cyclic extensions} of a Lie 
$p$-ring $L$; that is, Lie rings $H$ containing $L$ as ideal with quotient 
$H/L$ of size $p$. 

The Lazard correspondence translates the Lie $p$-rings of size $p^n$
to the groups of order $p^n$ and vice versa; if $p > n$, then this yields 
a one-to-one correspondence between their isomorphism types. This 
correspondence adds to the interest in constructing the Lie $p$-rings 
of order $p^n$, where $n$ is fixed, but $p$ remains indeterminate. We 
refer to Higman \cite{Hig60} for further background. 

Given a prime $p$ and $n \in \N$, the groups of order $p^n$ can in principle
be constructed using the $p$-group generation algorithm by O'Brien 
\cite{OBr90}. This algorithm considers all groups $G$ of order $p^m$ with 
$m < n$ in turn. Given one such $G$, it computes all {\em descendants} of 
$G$; that is, the groups $H$ so that $H / \lambda(H) \cong G$, where
$\lambda(H)$ is the smallest non-trivial term of the lower $p$-central 
series of $H$. The computation of descendants of a group $G$ is based
on the computation of its $p$-cover $G^*$.

Included below is a method to compute the Lie $p$-cover $L^*$ of a Lie $p$-ring
$L$. The main ideas of this method for Lie $p$-rings are similar of that for 
$p$-groups. We include a new elementary proof for the correctness of the
method for Lie $p$-rings; this is based on the construction
of cyclic extensions for Lie $p$-rings. Moreover, we show how the 
$p$-cover algorithm for Lie $p$-rings generalizes to parametrised (or
symbolic) Lie $p$-rings; that is, Lie $p$-rings with $p$ indeterminate 
and possible further parametrisations. This variation does not determine 
a single Lie $p$-cover, but a finite family of such, describing the 
possible Lie $p$-covers symbolically.

The Lie $p$-cover algorithm developed here applies to the symbolic Lie 
$p$-rings of order dividing $p^7$ determined by O'Brien and Vaughan-Lee 
\cite{OBVL05}, which are available electronically in the Lie $p$-ring 
package \cite{VLE15} of the computer algebra system GAP \cite{GAP}. An
implementation of the algorithm developed here will be made available in
this package. It provides a first step towards a symbolic algorithm to 
determine the Lie $p$-rings of size $p^8$.

\section{Lie $p$-rings and presentations}
\label{liepres}

We recall the basic theory of Lie $p$-rings and their Lie $p$-presentations.
A \emph{Lie ring} is an additive abelian group with a bracket operation
$[\cdot,\cdot]$ which is bilinear, alternating, and satisfies
the Jacobi-identity. Given a Lie ring $L$, its lower central series
is defined by $L = L_1 > L_2 > \cdots$ with $L_{i+1} = [L, L_i]$. The
Lie ring $L$ is nilpotent, if its lower central series terminates at
$\{0\}$; the length of this series is then the {\em class} of $L$.

A {\em Lie $p$-ring} is a nilpotent Lie ring with $p^n$ elements for
some $n \in \N$ and a prime $p$.  Given a Lie $p$-ring $L$, its lower 
$p$-central series
is defined by $L = \Lambda_1(L) > \Lambda_2(L) > \cdots$ with $\Lambda_{i+1}(L) = [L, \Lambda_i(L)] + p\Lambda_i(L)$.
The nilpotency of $L$ implies that the lower $p$-central series terminates
at $\{0\}$; the length of this series is the {\em $p$-class} of $L$.

If $L$ is a Lie $p$-ring of order $p^n$, then we often denote $n$ as the
{\em dimension} of $L$ (despite the fact that $L$ is not a vector space).
We also say that $l_1, \ldots, l_n$ is a {\em basis} of $L$ if the series
$L = U_1 > U_2 > \cdots$ defined by $U_i = \langle l_i, \ldots, l_n \rangle$
for $1 \leq i \leq n+1$ is strictly decreasing and refines the lower 
$p$-central series.

\subsection{Free Lie rings}

We briefly recall an explicit construction for the free Lie ring $F_n$
on $n$ generators $l_1, \ldots, l_n$ over $R$, where $R$ is a commutative
ring with $1$. Let \(X=\{l_1,\dots,l_n\}\), and let \(M(X)\) be the free 
magma on \(X\), i.e. the set of all  non-associative bracketed expressions 
in the letters \(l_i\). By definition, $X \subseteq M(X)$. The free magma 
has a natural binary operation
\[ m: M(X) \times M(X) \to M(X), \qquad (a,b) \mapsto a \star b.  \]
Let $A_R(X)$ be free non-associative  algebra over $R$ generated by 
\(X\), i.e., $A_R(X)$ is the $R$-span of $M(X)$.
The multiplication on $A$ is obtained by $R$-bilinearly extending the map 
$m$. Let \(J\) be the ideal of \(A_R(X)\) generated by all elements
\[
a\star a,\quad
a\star b+ b\star a, \quad\text{and} \quad
(a\star(b\star c))+(c\star(a\star b))+(b\star(c\star a)),
\]
for all \(a,b,c\in M(X)\). Then the quotient $F_n \cong A_R(X)/J$ is the 
free Lie ring on generators \(l_1,\dots,l_n\) over $R$. Note that the 
multiplication induced by \(\star\) on the quotient is bilinear, 
alternating, and satisfies the Jacobi identity, hence \(F_n\) is indeed 
a Lie ring.

\subsection{Lie $p$-presentations}

Let $p$ be a prime and $R = \Z$.
Write $\Z_p = \{ 0, \ldots, p-1\}$ and $A = (a_{i,j,k} \mid 1 \leq i \leq
j < k \leq n)$ be an array with entries $a_{i,j,k} \in \Z_p$. Let $\mathcal{P}(A)$
denote the presentation on $l_1, \ldots, l_n$ subject to the relations
\begin{eqnarray*}
  pl_i      &=& \sum_{k=i+1}^n a_{i,i,k} l_k
                \;\; \mbox{ for } \;\; 1 \leq i \leq n, \\
 \left [ l_i, l_j \right ] &=& \sum_{k=j+1}^n a_{i,j,k} l_k
                \;\; \mbox{ for } \;\; 1 \leq i < j \leq n. \\
\end{eqnarray*}

Then $\mathcal{P}(A)$ is called a {\em Lie $p$-presentation}. This 
notation is justified by the following.

\begin{lemma}\label{lem1}
Let $F_n$ be the free Lie ring on $n$ generators over $\Z$, and let $I(A)$ be the ideal in $F_n$ defined by the relations in $\mathcal{P}(A)$. Then $L(A) = F_n/I(A)$ is a Lie $p$-ring with 
at most $p^n$ elements.
\end{lemma}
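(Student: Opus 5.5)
The plan is to study the images $\bar l_i$ of the generators in $L = L(A)$ and the additive subgroups $U_i = \langle \bar l_i, \ldots, \bar l_n \rangle$ for $1 \leq i \leq n+1$, with $U_{n+1} = \{0\}$. I will show three things: $L = U_1$ as an abelian group; $|U_i/U_{i+1}| \leq p$ for each $i$; and $L$ is nilpotent. Together these give the claim.

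First I show that each $U_i$ is an ideal of $L$ with $[L, U_i] \subseteq U_{i+1}$. I argue by downward induction on $i$. The case $U_{n+1} = 0$ is trivial. For $x \in U_i$, write $x = c \bar l_i + u$ with $u \in U_{i+1}$. For any generator $\bar l_j$ we have $[\bar l_j, x] = c[\bar l_j, \bar l_i] + [\bar l_j, u]$.

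1. If $j \neq i$, the relations give $[\bar l_j, \bar l_i] \in U_{\max(i,j)+1} \subseteq U_{i+1}$, using antisymmetry when $j > i$.
2. If $j = i$, then $[\bar l_i, \bar l_i] = 0$.
3. The term $[\bar l_j, u]$ lies in $U_{i+2} \subseteq U_{i+1}$ by induction.

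So $[\bar l_j, U_i] \subseteq U_{i+1}$ for every generator $\bar l_j$.

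Next I pass from generators to all of $L$. Let $S$ be the set of $y \in L$ with $[y, U_k] \subseteq U_{k+1}$ for all $k$. By bilinearity $S$ is an additive subgroup. It is closed under brackets by the Jacobi identity: $[[y,z],w] = [y,[z,w]] - [z,[y,w]]$, which lies in $U_{k+2} \subseteq U_{k+1}$ when $w \in U_k$. Since $S$ contains the generators and $L$ is generated as a Lie ring by them, $S = L$.

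It follows that $U_1$ is closed under brackets and contains all the $\bar l_i$. Since $L$ is generated as a Lie ring by the $\bar l_i$, we get $L = U_1$ as an abelian group. The main obstacle is exactly this step: making sure we only use the relations and the Lie axioms, so that the induction is formulated simultaneously over all $k$, as the set $S$ does.

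Finally, the relation $p\bar l_i \in U_{i+1}$ shows that $U_i/U_{i+1}$ is cyclic, generated by the image of $\bar l_i$, of order dividing $p$. Hence $|L| = \prod_i |U_i/U_{i+1}| \leq p^n$. Moreover $[L, U_i] \subseteq U_{i+1}$ gives $L_i \subseteq U_i$ by induction, so $L_{n+1} = 0$ and $L$ is nilpotent. Its order is a power of $p$, so $L$ is a Lie $p$-ring.
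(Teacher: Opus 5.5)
Your proof is correct and takes the same route as the paper: the power relations bound the order by $p^n$ through the filtration $U_i=\langle \bar l_i,\ldots,\bar l_n\rangle$, and the commutator relations give $[L,U_i]\subseteq U_{i+1}$, hence nilpotency. The paper only asserts that every element of $L(A)$ has the form $x_1l_1+\cdots+x_nl_n$; your downward induction, together with the Jacobi-identity argument for the set $S$, supplies the justification the paper leaves implicit, namely that the additive span of the generators is closed under brackets.
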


\begin{proof}
By construction, $L(A)$ is a Lie ring. The relations with right-hand-side
$p l_i$ ensure that $L(A)$ has at
most $p^n$ elements and these can be
described by $x_1 l_1 + \cdots + x_n l_n$ with $x_i \in \Z_p$. The relations
with right-hand-side $[l_i, l_j]$ yield that $L(A)$ is nilpotent, since
$[l_i, l_j] \in \langle l_{j+1}, \ldots, l_n \rangle$ for each $i<j$.
In summary, $L(A)$ is a Lie $p$-ring.
\end{proof}

A Lie $p$-presentation $\mathcal{P}(A)$ is called {\em consistent} if its associated
Lie $p$-ring $L(A)$ has precisely $p^n$ elements. We observe the following.

\begin{lemma}
Each Lie $p$-ring $L$ of dimension $n$ can be defined by a consistent
Lie $p$-presentation $\mathcal{P}(A)$ on $n$ generators.
\end{lemma}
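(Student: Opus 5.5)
We start from a basis $l_1,\ldots,l_n$ of $L$ in the sense defined above: the series $U_i=\langle l_i,\ldots,l_n\rangle$ is strictly decreasing and refines the lower $p$-central series. To produce such a basis, we refine each section $\Lambda_j(L)/\Lambda_{j+1}(L)$. Since $p\Lambda_j(L)\subseteq\Lambda_{j+1}(L)$, each section is an elementary abelian $p$-group, so it has a basis over $\mathbb{F}_p$. Lifting these bases to $L$ and concatenating them in order of increasing $j$ gives $l_1,\ldots,l_n$. Each step $U_i/U_{i+1}$ then has order exactly $p$. Because $|L|=p^n$, there are exactly $n$ elements, and every element of $L$ is uniquely of the form $x_1l_1+\cdots+x_nl_n$ with $x_i\in\Z_p$ (by induction down the series).

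Next we read off the array $A$. If $l_i$ lies in the section at level $j$, then $pl_i\in p\Lambda_j(L)\subseteq\Lambda_{j+1}(L)\subseteq U_{i+1}$. Hence $pl_i=\sum_{k>i}a_{i,i,k}l_k$ with uniquely determined coefficients in $\Z_p$. Similarly, take $i<j$ and let $l_j$ lie at level $m$. Then $[l_i,l_j]\in[L,\Lambda_m(L)]\subseteq\Lambda_{m+1}(L)\subseteq U_{j+1}$, so $[l_i,l_j]=\sum_{k>j}a_{i,j,k}l_k$. This defines $A$ with entries in $\Z_p$ and the required index ranges. The key point in both cases is that $U_{i+1}$ contains the next term of the lower $p$-central series, because the basis refines that series.

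Finally we show that $\mathcal{P}(A)$ is consistent and defines $L$. The generators $l_i$ of $L$ satisfy all the relations of $\mathcal{P}(A)$, and they generate $L$ additively. So the universal property of $F_n$ gives a surjective Lie ring homomorphism $F_n\to L$ with $I(A)$ in its kernel. This induces a surjection $L(A)\to L$. By Lemma \ref{lem1}, $|L(A)|\le p^n=|L|$, so the surjection is a bijection. Hence $L(A)\cong L$ and $L(A)$ has exactly $p^n$ elements, which means $\mathcal{P}(A)$ is consistent.

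The main obstacle is the bookkeeping in the second step: checking that the refinement really places $pl_i$ and $[l_i,l_j]$ strictly below $l_i$ and $l_j$ respectively. This relies on $\Lambda_{m+1}(L)\subseteq U_{j+1}$ whenever $l_j$ lies at level $m$. That inclusion holds because every $l_k$ with $k>j$ lies at level at least $m$, and all the lifted basis elements of level greater than $m$ come after $l_j$.
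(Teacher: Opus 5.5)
Your proof is correct and follows the paper's argument: read the structure constants off a basis that refines the lower $p$-central series, then combine the induced surjection $L(A)\to L$ with Lemma~\ref{lem1} to conclude $|L(A)|=p^n$. The only difference is that you also construct such a basis explicitly and verify $pl_i\in U_{i+1}$ and $[l_i,l_j]\in U_{j+1}$. The paper takes both of these points for granted.
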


\begin{proof}
Let $l_1, \ldots, l_n$ be a basis of $L$. Then each element of $L$ can
be uniquely described in the form $x_1 l_1 + \cdots + x_n l_n$ with
$x_i \in \Z_p$; we call this the {\em normal form} of an element in
$L$. Further, $p l_i \in \langle l_{i+1}, \ldots, l_n \rangle$ and
$[l_i, l_j] \in \langle l_{j+1}, \ldots, l_n \rangle$ for each $i < j$,
since a basis exhibits the lower $p$-central series of $L$. Expressing each element $pl_i$ and
$[l_i,l_j]$ in its normal form determines coefficients
$a_{i,i,k}$ and $a_{i,j,k}$, and hence defines a Lie $p$-presentation
$\mathcal P(A)$. By construction, the generators $l_1,\ldots,l_n$ satisfy the defining
relations of $\mathcal P(A)$ in the Lie $p$-ring $L$. Therefore, the
natural homomorphism from the Lie ring defined by $\mathcal P(A)$ onto
$L$ is surjective. By Lemma \ref{lem1}, the presented Lie $p$-ring has at most $p^n$ elements, while $L$ has exactly
$p^n$ elements. Hence the homomorphism is an isomorphism, and
$\mathcal P(A)$ is a consistent Lie $p$-presentation of $L$.
\end{proof}

\subsection{Symbolic Lie $p$-presentations}
\label{symb}

Let $p$ be an indeterminate, $\ell \in \N$ and $w, x_1, \ldots, x_m$ 
commuting indeterminates. Define $R_\ell = \Z[1/\ell][w, x_1, \ldots, x_m]$;
then $R_\ell$ is an integral domain. Let $Z$ be an ideal in $R_\ell$ and 
$U$ a multiplicatively closed subset of $R_\ell \setminus \{0\}$. We write
\[ R_\ell(U) = \{ \frac{f}{g} \mid f \in R_\ell, g \in U\}.\]

Let $A=(a_{i,j,k}\mid 1\le i\le j<k\le n)$ be an array with entries 
$a_{i,j,k}\in R_\ell(U)$. As before, let $\mathcal{P}(A)$ denote the 
presentation on $l_1, \ldots, l_n$ subject to the relations
\begin{eqnarray*}
  p l_i      &=& \sum_{k=i+1}^n a_{i,i,k} l_k
                \;\; \mbox{ for } \;\; 1 \leq i \leq n, \\
 \left [ l_i, l_j \right ] &=& \sum_{k=j+1}^n a_{i,j,k} l_k
                \;\; \mbox{ for } \;\; 1 \leq i < j \leq n. \\
\end{eqnarray*}
Then $\mathcal{P}(A)$ is a {\em symbolic Lie $p$-presentation}. A symbolic 
Lie $p$-presentation defines a family of  Lie $p$-presentations via
evaluation. For this, let $P$ be a prime with $\gcd(P,\ell) = 1$ and let
$W$ be the smallest primitive root mod $P$. Define $\Sigma_P(U,Z)$  to be 
the set of all tuples $X = (X_1, \ldots, X_m) \in \Z_P^m$ so that
\begin{eqnarray*}
&& h(W,X_1, \ldots, X_m) \not\equiv 0 \pmod {P} \mbox{ for all } h \in U, 
     \mbox{ and } \\
&& z(W,X_1, \ldots, X_m) \equiv 0 \pmod {P}  \mbox{ for all } z \in Z.
\end{eqnarray*}
Each entry in the array $A$ has the form $f/g$ with $f \in R$ and $g \in
U$. We evaluate such an entry at $X \in \Sigma_P(U,Z)$ via 
$f(W,X_1, \ldots, X_m)/g(W, X_1, \ldots, X_m) \bmod P$. We evaluate
the array $A$ by evaluating each entry. Then $\mathcal{P}(A)$ evaluated at a prime $P$ and $X \in \Sigma_P(U,Z)$ is 
the Lie $p$-presentation $\mathcal{P}(\ol{A})$, where $\ol{A} = A(W,X)$ is 
the evaluation of $A$ at $X$. By construction and Lemma \ref{lem1}, 
each such evaluation defines a Lie $p$-ring with at most $P^n$ elements.

We say that the symbolic Lie $p$-presentation $\mathcal{P}(A)$ is
{\em consistent} with respect to a set of primes $\Pi$ if its evaluation 
is consistent for each choice of a prime $P \in \Pi$ with $\gcd(P, \ell) 
= 1$ and each choice of $X \in \Sigma_P(U,Z)$.
                
\section{Derivations, extensions and consistency}
\label{deriv}

We now introduce derivations for Lie $p$-rings and cyclic extensions 
together with their application to consistency checking.

\subsection{Derivations}

Let $L$ be a Lie $p$-ring. We say that a map $D : L \ra L$ is {\em linear}
if it is a group homomorphisms with respect to addition. The map $D$ is a
{\em derivation} if it is linear and satisfies
\[ D([x,y]) = [D(x),y] + [x,D(y)] \;\; \mbox{ for all } x,y \in L.\]

\begin{remark}
The set $\mathrm{Der}(L)$ of all derivations of $L$ is a finite Lie ring with
bracket multiplication $[D_1, D_2] = D_1 D_2 - D_2 D_1$. 
\end{remark}

The {\em adjoint} of $x \in L$ is the map $\mathrm{ad}_x : L \ra L 
: l \ms [x,l]$. We note that $\mathrm{ad}_x$ is a derivation for each 
$x \in L$ (called inner derivation induced by $x$). The set 
$\mathrm{Inn}(L) = \{ \mathrm{ad}_x \mid x \in L \}$ is an ideal in the
Lie ring $\mathrm{Der}(L)$ and $\mathrm{Inn}(L) \cong L/Z(L)$, where $Z(L) = \{ x \in L 
\mid [x,l] = 0 \mbox{ for all } l \in L\}$. 

A {\em cyclic extension} of a Lie $p$-ring $L$ is a Lie ring $H$
satisfying a short exact sequence of the form
\[ \{0\} \ra L \ra H \ra C \ra \{0\},\]
where $C$ is the (unique) 1-dimensional Lie $p$-ring; that is, $C =
\langle c \mid pc = 0 \rangle$. The following lemma exhibits a link
betweeen cyclic extensions and derivations.

\begin{lemma}
\label{cycext1}
Let $H$ be a cyclic extension of a Lie $p$-ring $L$ and let 
$\sigma : H \ra C$ be the associated Lie ring epimorphism with 
kernel $L$. Let $h \in H$ so that $C = \langle \sigma(h) \rangle$. 
Then
\begin{items}
\item[\rm (a)]
$D : L \ra L : y \ms [h,y]$ is a derivation of $L$.
\item[\rm (b)]
$p D : L \ra L : y \ms p [h,y]$ is the inner derivation 
$\mathrm{ad}_{ph}$ of $L$.
\end{items}
\end{lemma}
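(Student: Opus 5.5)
The plan is to identify $L$ with its image in $H$ and argue directly from the Lie ring axioms in $H$, using that $L$ is the kernel of $\sigma$ and hence an ideal of $H$.

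For (a), I first check that $D$ is a well-defined map $L \ra L$. For $y \in L$ we have $\sigma([h,y]) = [\sigma(h),\sigma(y)] = [\sigma(h),0] = 0$, so $[h,y] \in \ker\sigma = L$. Linearity of $D$ comes from bilinearity of the bracket in $H$: $[h,y_1+y_2] = [h,y_1]+[h,y_2]$. For the Leibniz rule, take $x,y \in L$ and apply the Jacobi identity in $H$ to $h,x,y$. Rewriting it with antisymmetry gives
\[ [h,[x,y]] = [[h,x],y] + [x,[h,y]], \]
that is, $D([x,y]) = [D(x),y] + [x,D(y)]$. So $D$ is a derivation of $L$.

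For (b), the first step is to show that $ph \in L$. Since $pc = 0$ in $C$, we get $\sigma(ph) = p\sigma(h) = 0$, hence $ph \in L$. Then for $y \in L$, bilinearity gives $p[h,y] = [ph,y] = \mathrm{ad}_{ph}(y)$. Here $pD$ means the map $y \ms p\,D(y)$ in the additive group of linear maps, so $pD = \mathrm{ad}_{ph}$, and this is an inner derivation of $L$ because $ph \in L$.

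None of these steps is a real obstacle. The points needing care are that $D$ actually lands in $L$ (because $L$ is an ideal) and that $ph$ lies in $L$ (because $C$ has exponent $p$). Both follow immediately from $\sigma$ being a Lie ring homomorphism with kernel $L$. The one thing to be careful about is the sign bookkeeping when rewriting the Jacobi identity with the alternating property: each of the three terms has to be converted correctly to reach the displayed Leibniz form.
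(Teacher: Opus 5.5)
Your proposal is correct and follows essentially the same route as the paper: $L=\ker\sigma$ is an ideal, so $D$ lands in $L$; linearity comes from bilinearity; the Leibniz rule comes from the Jacobi identity plus antisymmetry in $H$; and $p[h,y]=[ph,y]$ gives (b). You also check explicitly that $ph\in L$ because $\sigma(ph)=p\sigma(h)=0$ in $C$, which the paper leaves implicit but which is needed for $\mathrm{ad}_{ph}$ to be an inner derivation of $L$.
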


\begin{proof}
(a) Since \(L=\ker(\sigma)\) is an ideal of \(H\), we have $[h,y]\in L$
for every \(y\in L\). Hence \(D\) is well defined. Further, the Lie bracket 
is bilinear, and thus
\[ D(y+z) =[h,y+z] =[h,y]+[h,z] =D(y)+D(z) \]
for all \(y,z\in L\). Hence \(D\) is linear. Now, let \(y,z\in L\). Using 
the Jacobi identity and antisymmetry, we have that
\begin{eqnarray*}
 D([y,z]) &=& [h,[y,z]]=-[z,[h,y]]-[y,[z,h]] =[[h,y],z]+[y,[h,z]]  \\
          &=& =[D(y),z]+[y,D(z)].
\end{eqnarray*}
Hence \(D\) is a derivation of \(L\) and (a) is proved. \\
(b) Let \(y\in L\). Then $(pD)(y)=pD(y)=p[h,y]$.  Using the bilinearity 
of the Lie bracket, we obtain $p[h,y]=[ph,y]$.  Hence
\[ (pD)(y)=[ph,y]=\mathrm{ad}_{ph}(y).  \]
Since this holds for every \(y\in L\), we conclude that
$pD=\mathrm{ad}_{ph}$.
\end{proof}

The derivation $D$ in Lemma \ref{cycext1} is not unique, as it depends
on the choice of $h$. A different choice has the form $h' = kh +y$ for
some $k \in \{0, \ldots, p-1\}$ and $y \in L$, and thus yields an alternative 
derivation $D' = k D + \mathrm{ad}_y$. Note that $D' \equiv kD \bmod
\mathrm{Inn}(L)$.

\subsection{Cyclic extensions}

The following theorem provides a converse to Lemma~\ref{cycext1} by 
constructing a cyclic extension from an appropriate derivation.

\begin{theorem}
\label{thm4}
Let $L$ be a Lie $p$-ring, and let $D \in \mathrm{Der}(L)$ so that 
$pD = \mathrm{ad}_w \in \mathrm{Inn}(L)$ for some $w \in L$ and $D(w) = 0$.
Define
\[ H = \{ (x, l) \mid 0 \leq x < p, l \in L \} \]
with the operations
\begin{items}
\item[\rm (i)]
$(x, l) + (y, k) = (r, qw+l+k)$ if $x+y = qp+r$ with $0 \leq r < p$, and
\item[\rm (ii)]
$[(x, l),(y, k)] = (0, xD(k) - yD(l) + [l,k])$.
\end{items}
Then $H$ is a cyclic extension of $L$. 
\end{theorem}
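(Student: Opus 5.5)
Since $H$ has $p\cdot|L|$ elements, the plan is to check four things: $(H,+)$ is an abelian group; the bracket (ii) is bilinear, alternating and satisfies the Jacobi identity; $L$ embeds as an ideal via $l \ms (0,l)$; and the quotient is $C$. Two facts about $w$ will be used repeatedly. First, $[w,k]=pD(k)$ because $pD=\mathrm{ad}_w$. Second, $D(w)=0$. Note also that $[w,w]=0$ gives $pD(w)=0$ automatically, so the hypothesis $D(w)=0$ is genuinely stronger.

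\textbf{Additive structure.} I would identify $H$ with the quotient of $\Z\times L$ by the subgroup generated by $(p,-w)$. The pair $(x,l)$ with $0\le x<p$ is the normal form of the class of $(x,l)$, since $(x+y,l+k)\equiv(r,qw+l+k)$. Thus (i) is the induced addition, and $(H,+)$ is an abelian group with no associativity computation needed. The map $l\ms(0,l)$ is an injective homomorphism. The map $(x,l)\ms x \bmod p$ is a surjective homomorphism onto $C$ with kernel $\{(0,l)\}$.

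\textbf{Well-definedness of the bracket.} On $\Z\times L$ define
\[ \beta((x,l),(y,k)) = (0,\, xD(k)-yD(l)+[l,k]). \]
This is visibly biadditive and alternating: putting $(y,k)=(x,l)$ gives $xD(l)-xD(l)+[l,l]=0$. The key step is that $\beta$ vanishes when one argument is $(p,-w)$:
\[ \beta((p,-w),(y,k)) = pD(k)+yD(w)-[w,k] = 0, \]
using $pD(k)=[w,k]$ and $D(w)=0$. So $\beta$ descends to a biadditive alternating bracket on $H$. On normal forms it agrees with (ii), because $\beta$ applied to representatives is exactly formula (ii). In particular $[(x,l),(0,k)]=(0,xD(k)+[l,k])\in L$, so $L$ is an ideal and the epimorphism onto $C$ respects brackets, since $C$ is abelian.

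\textbf{Jacobi identity.} I expect this to be the main computational obstacle, though it should be routine. By trilinearity it suffices to check it on the generators $(1,0)$ and $(0,l)$, $l\in L$. Triples drawn entirely from $L$ reduce to Jacobi in $L$. Triples containing $(1,0)$ twice vanish by alternation: with $a=(1,0)$, $[a,[a,b]]+[a,[b,a]]+[b,[a,a]]=0$. The remaining case is the triple $(1,0),(0,k),(0,m)$, where the bracket is $[(1,0),(0,k)]=(0,D(k))$. Here Jacobi becomes
\[ D([k,m]) - [D(k),m] - [k,D(m)] = 0, \]
which is exactly the derivation property. Hence $H$ is a Lie ring fitting in $0\ra L\ra H\ra C\ra 0$, i.e.\ a cyclic extension of $L$.
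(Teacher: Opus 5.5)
Your proof is correct, but it organizes the verification differently from the paper.

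The paper proceeds in three direct steps:
\begin{itemize}
\item it takes the group structure of $(H,+)$ from Huppert;
\item it checks bilinearity by a carry computation with $x+y=qp+r$, using $D(w)=0$ and $rD(m)+q[w,m]=(x+y)D(m)$, and leaves the second argument to ``similar arguments'';
\item it expands the Jacobi sum for three arbitrary elements $(x,l),(y,k),(z,m)$ and cancels the $D^2$ terms and the mixed terms by hand.
\end{itemize}

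You instead realise $H$ as $(\Z\times L)/\langle (p,-w)\rangle$, so the group axioms are automatic. Well-definedness and bilinearity of the bracket then reduce to the single check $\beta((p,-w),\cdot)=0$. This is exactly the paper's carry identity, but it handles both arguments at once via antisymmetry. It also isolates precisely where $pD=\mathrm{ad}_w$ and $D(w)=0$ are needed.

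For the Jacobi identity, you use that the Jacobiator is trilinear and alternating to collapse everything to a single triple, where the identity is literally the derivation rule. You use the alternating property implicitly when you reduce all orderings of $(1,0),(0,k),(0,m)$ to one. As a result, the $D^2$ bookkeeping never arises.

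Your route is shorter and self-contained. It also makes explicit the ideal and quotient claims that the paper dismisses as ``by construction''. The paper's route avoids the quotient formalism and the reduction to generators, and gives explicit formulas valid for arbitrary elements, which is closer to what one would implement.
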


\begin{proof}
We first note that the addition defined in (i) turns $H$ into an additive
abelian group (independent of $D$); see also Kapitel 1, Beispiel 14.8 in
Huppert \cite{Hup67}. Next, we verify that the operation (ii) defines a 
Lie bracket on \(H\). Let $a=(x,l)$, $b=(y,k)$, and $d=(z,m)$ be elements 
of $H$. Note that the Lie bracket is alternating, since $L$ is a Lie ring.
We consider the bilinearity and Jacobi identity in the following.
\medskip

\noindent
{\bf Bilinearity:} We consider linearity in the first component. Write 
$x+y = qp+r$ with $r \in \Z_p$. Then (keeping in mind that $D(w)=0$
and noting that $rD(m) + q[w,m] = (x+y) D(m)$)
\[
\begin{aligned}
[a+b,d]
&= [(r, qw+l+k),(z,m)] \\
&= (0,rD(m)-zD(q w+l+k)+[q w+l+k,m])\\
&= (0,rD(m)-zqD(w)-zD(l)-zD(k)+q[w,m]+[l,m]+[k,m]) \\
&= (0,rD(m)-zD(l)-zD(k)+q[w,m]+[l,m]+[k,m]) \\
&= (0,xD(m)+yD(m)-zD(l)-zD(k) +[l,m]+[k,m]), \mbox{ and } \\
[a,d]+[b,d]
&= (0,xD(m)-zD(l)+[l,m])  + (0,yD(m)-zD(k)+[k,m]) \\
&= (0,xD(m)+yD(m)-zD(l)-zD(k)+[l,m]+[k,m]).
\end{aligned}
\]
Thus $[a+b,d]=[a,d]+[b,d]$ follows. Linearity in the second component
follows with similar arguments.
\medskip

\noindent
{\bf Jacobi identity:} We evaluate the equation $[a,[b,d]] + [b,[d,a]]
+ [d,[a,b]]$. For this, first note that $[b,d] = (0,yD(m)-zD(k)+[k,m])$.
Hence
\[
\begin{aligned}
[a,[b,d]]
&= [(x,l),(0,yD(m)-zD(k)+[k,m])] \\
&= (0, xD(yD(m)-zD(k)+[k,m]) +[l,yD(m)-zD(k)+[k,m]]) \\
&= (0, xyD^2(m)-xzD^2(k) +x[D(k),m]+x[k,D(m)] \\
           &\qquad +y[l,D(m)]-z[l,D(k)] +[l,[k,m]]).
\end{aligned}
\]

Similarly, 
\[
\begin{aligned}
[b,[d,a]] &= (0, yzD^2(l)-xyD^2(m) +y[D(m),l]+y[m,D(l)] \\
         &\qquad +z[k,D(l)]-x[k,D(m)] +[k,[m,l]]), \mbox{ and } \\
[d,[a,b]] &= (0, xzD^2(k)-zyD^2(l) +z[D(l),k]+z[l,D(k)] \\
         &\qquad +x[m,D(k)]-y[m,D(l)] +[m,[l,k]]).
\end{aligned}
\]
As $L$ is a Lie ring, it follows that $[l,[k,m]] + [k,[m,l]] + [m,[l,k]]=0$.
Using antisymmetry, it now follows that $[a,[b,d]]+[b,[d,a]]+[d,[a,b]]=0$. 
Therefore, $H$ is a Lie ring. By construction, $H$ is a cyclic extension.
\end{proof}

\subsection{Consistency}

Let $\mathcal{P}(A)$ denote a Lie $p$-presentation with generators 
$l_1, \ldots, l_n$. The power relations in $\mathcal{P}(A)$  can be used to reduce an element $a_1 l_1 + \cdots + a_n l_n$ with $a_i \in \Z$
by iteratively applying the relations $p l_i$ as long as there is a 
coefficient $a_i \not \in \Z_p$. We apply the power relations {\em from 
the left to right}; that is, we begin with the term  $a_1 l_1$. Writing
$a_1 = qp+r$ with $r \in \Z_p$, we replace the given word by subtracting $q$ times the power 
relation $p l_1 - \sum a_{1,1,k} l_k$. Repeating this procedure, we eventually obtain a (unique) equivalent word 
$b_1 l_1 + \cdots + b_n l_n$ with $b_i \in \Z_p$. We write 
\[ RF(a_1 l_1 + \cdots + a_n l_n) = b_1 l_1 + \cdots + b_n l_n,\]
and we call the expression on the right-hand side the {\em power reduced form} of the
original expression.

\begin{remark}
Let $L$ be the Lie $p$-ring defined by Lie $p$-presentation $\P(A)$. 
If $\P(A)$ is consistent, then each element of $L$ has a unique power 
reduced form; that is, the power reduced form is a normal form in this 
case. If $\P(A)$ is not consistent, then different power reduced forms 
define the same element. 
\end{remark}

Using the relations of $\mathcal{P}(A)$, it follows that 
(denoting $a_{i,j,k} = 0$ if $k < j$)
\[ [l_i, [l_j, l_k]] 
  = \sum_h  a_{j,k,h} [l_i, l_h]
  = \sum_u (\sum_h a_{j,k,h} a_{i,h,u}) l_u.\]
We define 
\begin{eqnarray*}
 J(i,j,k) 
 &:=& 
 \sum_u (\sum_h a_{j,k,h} a_{i,h,u} 
               + a_{k,i,h} a_{j,h,u} 
               + a_{i,j,h} a_{k,h,u}) l_u  \\
 &=& [l_i, [l_j, l_k]] + [l_j, [l_k, l_i]] + [l_k, [l_i, l_j]], 
\end{eqnarray*}
and 
\begin{eqnarray*}
 P(i,j) 
  &:=& \sum_k (p a_{i,j,k} - \sum_h a_{h,j,k} a_{i,i,h}) l_k  \hspace{2cm} \\
  &=& \sum_k p a_{i,j,k} l_k - [\sum_h a_{i,i,h} l_h, l_j] \\
  &=& p[l_i, l_j] - [pl_i, l_j] \\
\end{eqnarray*} 
An important application of the cyclic extension construction is the following theorem, which forms the basis of an algorithm for testing the consistency of Lie $p$-presentations.

\begin{theorem}\label{thm5}
Let $\mathcal{P}(A)$ be a Lie $p$-presentation. Then 
$\mathcal{P}(A)$ is consistent if and only if
\begin{items}
\item[\rm (a)]
$RF(J(i,j,k)) = 0$ for all $1 \leq i, j, k \leq n$.
\item[\rm (b)]
$RF(P(i,j)) = 0$ for all $1 \leq i,j \leq n$. 
\end{items}
\end{theorem}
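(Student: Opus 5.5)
The forward direction is immediate, and I would dispose of it first. If $\mathcal{P}(A)$ is consistent, then $L(A)$ has $p^n$ elements and the power reduced form is a normal form, by the Remark preceding the theorem. In $L(A)$ the elements $J(i,j,k)$ are Jacobi expressions and the elements $P(i,j) = p[l_i,l_j]-[pl_i,l_j]$ are expressions that vanish by bilinearity. Hence both are zero in $L(A)$, and their unique normal forms $RF(J(i,j,k))$ and $RF(P(i,j))$ are $0$.

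For the converse I would argue by induction on $n$, building $L(A)$ as an iterated cyclic extension via Theorem~\ref{thm4}. For $1\le s\le n$ let $\mathcal{P}_s$ be the subpresentation on $l_s,\ldots,l_n$, consisting of the relations of $\mathcal{P}(A)$ involving only these generators. The right-hand sides of these relations lie in $\langle l_{s+1},\ldots,l_n\rangle$, so $\mathcal{P}_s$ is again a Lie $p$-presentation, and conditions (a), (b) restricted to indices $\ge s$ hold for it. Since power reduction of an expression supported on indices $\ge s$ only uses relations $pl_t$ with $t\ge s$, the forms $RF$ computed in $\mathcal{P}_s$ agree with those in $\mathcal{P}(A)$.

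The base case $s=n$ is trivial: $\mathcal{P}_n$ is $pl_n=0$. For the inductive step, assume $L_{s+1}=L(\mathcal{P}_{s+1})$ has exactly $p^{n-s}$ elements with normal forms given by $RF$. I would then do the following.
\begin{items}
\item[(1)] Define $D: L_{s+1}\to L_{s+1}$ on the basis by $D(l_j)=\sum_k a_{s,j,k}l_k$ for $j>s$, extended additively. This is well defined because $L_{s+1}$ is free on normal forms as a set, but I must check that $D$ respects the power relations: $D(pl_j)=pD(l_j)$. That check is exactly $RF(P(s,j))=0$ rewritten with the roles arranged appropriately. On reflection, the check also needs conditions of the form $P(j,s)$, which is why (b) is required for all pairs $i,j$.
\item[(2)] Show that $D$ is a derivation. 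It suffices to check $D[l_j,l_k]=[Dl_j,l_k]+[l_j,Dl_k]$ on basis elements, and this is precisely $J(s,j,k)=0$ after reduction.
\item[(3)] Set $w=pl_s=\sum_k a_{s,s,k}l_k\in L_{s+1}$. Verify that $pD=\mathrm{ad}_w$, which on generators is $P(s,j)=0$ again, and that $D(w)=0$, which follows from $\mathrm{ad}_w(w)=0$ together with $pD=\mathrm{ad}_w$ and the derivation property; alternatively it can be read off from $J(s,s,\cdot)$ and $P(s,s)$.
\item[(4)] Apply Theorem~\ref{thm4} to obtain a Lie ring $H$ of order $p\cdot p^{n-s}$. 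In $H$ the elements $(1,0)$ and $(0,l_j)$ satisfy all relations of $\mathcal{P}_s$: the bracket $[(1,0),(0,l_j)]=(0,D(l_j))$, and $p(1,0)=(0,w)$.
\end{items}
Hence $L(\mathcal{P}_s)$ maps onto $H$. By Lemma~\ref{lem1} it has at most $p^{n-s+1}$ elements, so it is isomorphic to $H$ and consistent.

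The main obstacle is step (1), the well-definedness of $D$ as an additive map on $L_{s+1}$. Because $L_{s+1}$ is not elementary abelian, $D$ must be shown compatible with the power relations $pl_j=\sum a_{j,j,k}l_k$. This amounts to $pD(l_j)=D(pl_j)$ in $L_{s+1}$. Using $pD(l_j)=[pl_s,l_j]$, which is the content of $P(s,j)$, I would reduce it to $[w,l_j]=[l_s,pl_j]$, which follows from $P(j,s)$ and antisymmetry. I would expect to spend effort matching the exact index pattern in the definitions of $J$ and $P$, including the convention $a_{i,j,k}=0$ for $k\le j$ and the antisymmetric extension of the $a_{i,j,k}$ for $i>j$. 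This matching is needed so that each required identity in $L_{s+1}$ is literally one of the given conditions (a), (b).
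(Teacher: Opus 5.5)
Your route is the paper's: induct along $\langle l_s,\ldots,l_n\rangle$, let $D$ be bracketing with $l_s$, get the derivation property from $J(s,j,k)$ and $pD=\mathrm{ad}_w$ from $P(s,j)$, and invoke Theorem~\ref{thm4}. In two places you are more careful than the paper.

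\textbf{Additivity of $D$.} The paper only says ``since $L_2$ is consistent, $D$ is a well-defined linear map''. But additivity genuinely needs $P(j,s)=0$ for $j>s$, and nothing else. With the antisymmetric convention, $P(j,s)$ expands literally to $D(pl_j)-pD(l_j)$, so you do not need to route through $P(s,j)$. Checking these relations suffices for the following reason. The abelian group defined by the power relations of $\mathcal{P}_{s+1}$ alone has a triangular relation matrix with diagonal entries $p$, so it has order $p^{n-s}$. It therefore coincides with the additive group of $L_{s+1}$.

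\textbf{The final identification.} The paper asserts that $L$ ``is'' the cyclic extension. Your surjection $L(\mathcal{P}_s)\to H$, combined with the bound of Lemma~\ref{lem1}, is the right way to conclude this.

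\textbf{The error.} Step (3) contains one invalid argument: $D(w)=0$ does not follow from $pD=\mathrm{ad}_w$ and the derivation property. These give only $pD(w)=[w,w]=0$. Comparing $D(pD(x))=pD(D(x))=[w,D(x)]$ with $D([w,x])=[D(w),x]+[w,D(x)]$ adds $[D(w),x]=0$ for all $x$. So $D(w)$ is central of order dividing $p$, not necessarily zero.

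A counterexample is $pl_1=l_2$, $pl_2=l_3$, $[l_1,l_2]=l_3$, the paper's first example. There $L_2=\langle l_2,l_3\rangle$ is cyclic of order $p^2$, with $D(l_2)=l_3$ and $D(l_3)=0$. The map $D$ is additive and a derivation, and $pD=0=\mathrm{ad}_{l_2}$. Yet $D(w)=D(l_2)=l_3\neq 0$, and the presentation is inconsistent; the only failing condition is $P(1,1)=l_3$.

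So $D(w)=0$ is an independent hypothesis of Theorem~\ref{thm4}. It must come from $P(s,s)=-[pl_s,l_s]=[l_s,w]=D(w)$, exactly as in the paper; $J(s,s,k)$ vanishes identically and contributes nothing. Drop your first justification and keep the $P(s,s)$ one; with that change your proof is complete.
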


\begin{proof}
If $\mathcal{P}(A)$ is consistent, then it defines a Lie $p$-ring and 
thus the Jacobi identity and the compatibility of the Lie bracket with 
$p$-multiplication are satisfied. Hence (a) and (b) follow.

Conversely, assume that conditions (a) and (b) hold. We prove that 
$\mathcal{P}(A)$ is consistent by induction on 
the number of generators $n$. Consider the series 
$L_i = \langle l_i, \ldots, l_n \rangle$ for $1\leq i\leq n$. Note 
that $L_i$ is a Lie $p$-ring defined by $\mathcal{P}(A_i)$ for a 
subarray $A_i$ of $A$. The presentation $\mathcal{P}(A_n)$ is trivially 
consistent, so assume inductively that $\mathcal{P}(A_2)$
is consistent. Then $L_2$ has $p^{n-1}$ elements. The relations in
$\P(A)$ ensure that $L_2$ is an ideal in $L=L_1$. We define a map 
$D : L_2 \ra L_2$ via 
\[ D( a_2 l_2 + \cdots + a_n l_n ) = RF(\sum_{j=2}^n a_j [l_1, l_j]),\]
where $[l_1, l_j] = \sum a_{1jk} l_k$ as described by $\P(A)$. Since
$L_2$ is consistent, $D$ is a well-defined linear map.
We show that $D$ is a derivation. By condition (a) for 
$i=1$, we have $RF(J(1,j,k)) = 0$. As $J(1,j,k) \in L_2$, consistency of $L_2$ implies
that $J(1,j,k) = 0$. Hence
\begin{eqnarray*}
D([l_j, l_k]) 
  &=& [l_1, [l_j, l_k]] \\
  &=& -[[l_j, [l_k, l_1]] - [l_k, [l_1, l_j]] \\
  &=& [D(l_j), l_k] + [l_j, D(l_k)]. 
\end{eqnarray*}
Further, $pD = ad_{pl_1}$ by condition (b) for $i = 1$ and $j > 1$, and
$D(pl_1) = 0$ by condition (b) for $i=j=1$. Hence, in summary, all
conditions of Theorem \ref{thm4} are satisfied. Consequently, $L$
is a cyclic extension of $L_2$. It follows that $|L| = p |L_2| = p^n$ and 
hence $\P(A)$ is consistent.
\end{proof}

We now generalize Theorem \ref{thm5} to symbolic Lie $p$-presentations. 
In the symbolic case, $p$ is indeterminate, and the coefficients 
$a_{ijk}\in R_\ell(U)$. The main difference between the ordinary and the 
symbolic case is the computation of power reduced forms $RF$. Given 
$a_1 l_1 + \cdots + a_n l_n$ with $a_i \in R_\ell(U)[p]$, the power 
reduced form is obtained by repeatedly applying the following reductions.
\begin{items}
\item[$\bullet$]
$a_i$ is a polynomial in $p$ over the ring $R_\ell(U)$. Write $a_i = qp
+ a_i'$ with $a_i' \in R_\ell(U)[p]$ of smaller degree in $p$. Apply
the power relation with left hand side $p l_i$ to reduce.
\item[$\bullet$]
Now $a_i \in R_\ell(U)$; that is, $a_i = b_i/c_i$ with $b_i \in R_{\ell}$ 
and $c_i \in U$. Reduce $b_i$ modulo $Z$.
\end{items}
The first step is equivalent to the reduction procedure for a fixed prime. 
As before, the reductions are applied from left to right, beginning with 
the coefficient of $l_1$ and proceeding inductively through the generators. 
The second step is an option to reduce the elements of $R_\ell(U)$ further. 
As elements of $Z$ are evaluated
to $0$, this can be applied to all coefficients without changing the word.
Together these reductions produce a reduced form $b_1 l_1 + \cdots + b_n l_n$ 
with coefficients $b_1, \ldots, b_n \in R_\ell(U)$. We denote this reduced 
form by $RF(a_1 l_1 + \cdots + a_n l_n) = b_1 l_1 + \cdots + b_n l_n$.

\begin{theorem}\label{thm6}
Let $\mathcal{P}(A)$ be a symbolic Lie $p$-presentation. If
\begin{items}
\item[\rm (a)]
$RF(J(i,j,k)) = 0$ for all $1 \leq i, j, k \leq n$, and 
\item[\rm (b)]
$RF(P(i,j)) = 0$ for all $1 \leq i,j \leq n$, 
\end{items}
then $\P(A)$ is consistent.
\end{theorem}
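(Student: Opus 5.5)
The plan is to reduce Theorem~\ref{thm6} to Theorem~\ref{thm5}, applied to each evaluation. Fix a prime $P \in \Pi$ with $\gcd(P,\ell)=1$ and a tuple $X \in \Sigma_P(U,Z)$, and let $\ol{A} = A(W,X)$. By definition of consistency of a symbolic presentation, it suffices to show that $\mathcal{P}(\ol{A})$ is consistent. By Theorem~\ref{thm5}, this reduces to checking $RF(\ol{J}(i,j,k)) = 0$ and $RF(\ol{P}(i,j)) = 0$ in the evaluated presentation. Here $\ol{J}$ and $\ol{P}$ denote the expressions built from $\ol{A}$.

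The key step is a compatibility statement: the symbolic reduction commutes with evaluation. Let $\phi$ be the evaluation map sending $p \mapsto P$ and $(w,x_1,\ldots,x_m) \mapsto (W,X_1,\ldots,X_m)$. Then $\phi$ is a ring homomorphism $R_\ell(U)[p] \ra \Z_{(P)}$, reduced further modulo $P$ where needed. It is well defined because $\ell$ is invertible mod $P$ and every $g\in U$ satisfies $g(W,X)\not\equiv 0 \bmod P$.

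\begin{enumerate}
\item Since $J(i,j,k)$ and $P(i,j)$ are given by polynomial formulas in the $a_{i,j,k}$ and $p$, applying $\phi$ coefficientwise to the symbolic $J(i,j,k)$ gives exactly $\ol{J}(i,j,k)$, read as an element of $L(\ol{A})$. The same holds for $P(i,j)$.
\item Each symbolic reduction step is the subtraction of a multiple (with coefficients in $R_\ell(U)[p]$) of a defining relation $pl_i - \sum_k a_{i,i,k} l_k$, or the subtraction of an element of $Z$ from a coefficient. Under $\phi$, the first becomes subtraction of an integer multiple of a relation of $\mathcal{P}(\ol{A})$. The second becomes subtraction of something $\equiv 0 \bmod P$, because $z(W,X)\equiv 0$ for $z\in Z$.
\item Hence $\phi(w)$ and $\phi(RF(w))$ represent the same element of $L(\ol{A})$ for every symbolic word $w$.
\end{enumerate}

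If $RF(J(i,j,k))=0$ symbolically, then $\ol{J}(i,j,k)$ equals $0$ in $L(\ol{A})$, and the same holds for $P(i,j)$.

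The main obstacle is that Theorem~\ref{thm5} is phrased through the power reduced form of the evaluated expression, not through equality in $L(\ol{A})$. In an a priori inconsistent presentation these notions could differ, since one element can have several reduced forms. So I would not invoke Theorem~\ref{thm5} as a black box. Instead I would re-run its proof by induction on $n$, using the symbolic reductions directly.

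At the inductive step, $L_2$ is consistent, so reduced forms in $L_2$ are normal forms. The symbolic reduction of $J(1,j,k)$ and $P(1,j)$ only uses power relations of $l_h$ with $h\ge 2$, because these expressions lie in $\langle l_2,\ldots,l_n\rangle$. Their evaluations therefore represent elements of $L_2$, and by step 3 these elements are $0$. This yields precisely the identities $D([l_j,l_k])=[D(l_j),l_k]+[l_j,D(l_k)]$, $pD=\mathrm{ad}_{pl_1}$ and $D(pl_1)=0$ needed to apply Theorem~\ref{thm4}.

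For $P(1,1)$, the expression also lies in $L_2$, and the same argument gives $D(pl_1)=0$. Then Theorem~\ref{thm4} gives $|L(\ol{A})| = P\cdot P^{n-1}=P^n$, completing the induction for every admissible $P$ and $X$.
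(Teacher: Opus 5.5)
Your route is the paper's route: evaluate at $(P,X)$ and fall back on the fixed-prime argument. You rightly notice what the paper's one-line proof skips. $\ol{J}(i,j,k)$ and $\ol{P}(i,j)$ vanish in $L(\ol{A})$ for \emph{every} presentation, consistent or not, so vanishing there proves nothing. One has to get vanishing inside the inductively consistent $L_2$, where reduced forms are normal forms. (In that re-run, note also that $D$ being well defined and additive on $L_2$ needs $P[l_1,l_j]=[l_1,Pl_j]$, i.e.\ condition (b) for $(j,1)$; consistency of $L_2$ alone does not give it.)

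\textbf{The genuine gap.} It lies in your items 1--3, which you treat as routine. In $L(\ol{A})$ the coefficient of $l_u$ matters modulo the additive order of $l_u$, which may exceed $P$. So no single $\phi$ makes items 1 and 2 both true.
\begin{itemize}
\item If $\phi$ is the ring homomorphism into $\Z_{(P)}$, item 1 and the power-relation half of item 2 are correct for the presentation with coefficients $\phi(a_{ijk})$. But that is not the paper's $\mathcal{P}(\ol{A})$: replacing $\phi(a_{ijk})$ by its residue in $\{0,\ldots,P-1\}$ changes a right-hand side by a multiple of $Pl_k$. The $Z$-step also fails, because $z(W,X)\equiv 0 \bmod P$ only gives $\phi(z)\,l_u\in P\Z_{(P)}\,l_u$, which is nonzero when $Pl_u\neq 0$.
\item If instead you reduce modulo $P$, the $Z$-step is harmless, but item 1 and the power-relation half of item 2 fail.
\end{itemize}

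\textbf{Both failures really happen.}

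For the first, write $L=(\Z/P^4\Z)a\oplus(\Z/P^3\Z)b$ with $[a,b]=Pa$, in the basis $a,-b,Pa,-Pb,P^2a,-P^2b,P^3a$. This gives the integral presentation $pl_1=l_3$, $pl_2=l_4$, $pl_3=l_5$, $pl_4=l_6$, $pl_5=l_7$, $[l_1,l_2]=-l_3$, $[l_1,l_4]=-l_5$, $[l_1,l_6]=-l_7$, $[l_2,l_3]=l_5$, $[l_2,l_5]=l_7$, $[l_3,l_4]=-l_7$. All symbolic tests reduce to $0$, since this is a $\Z$-basis of the $\Z[p]$-Lie algebra $(\Z[p]/(p^4))a\oplus(\Z[p]/(p^3))b$. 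But once $-1$ is read as $P-1$, we get $P[l_1,l_2]=(P-1)l_5=l_7-l_5$, while $[Pl_1,l_2]=-l_5$. This forces $l_7=0$, so $\mathcal{P}(\ol{A})$ is inconsistent for every $P$.

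For the second, take $Z=(2x_1-1)$ and $pl_1=l_2$, $pl_3=l_4$, $[l_1,l_2]=(2x_1-1)l_3$. All tests reduce to $0$ modulo $Z$. But $\phi(2x_1-1)=P$ gives $0=[l_1,Pl_1]=[l_1,l_2]=Pl_3=l_4$, so the $\Z_{(P)}$-evaluation is inconsistent.

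\textbf{Consequence.} The statement needs a repaired notion of evaluation, and so does the paper's proof, which silently assumes the same commutation of $RF$ with evaluation. One should substitute into $\Z_{(P)}$ and then power-reduce. The $Z$-reduction should be used only where it is exact, e.g.\ on coefficients of generators with $pl_u=0$, or when $z(W,X)=0$ exactly. Under those conventions your induction goes through.
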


\begin{proof}
If the reduced forms evaluate to zero, then every evaluation of them will
evaluate to zero, and hence the result follows from Theorem \ref{thm5}.
\end{proof}

\subsection{Examples}

In the following descriptions we often omit relations of a (symbolic) 
Lie $p$-presentation that have a trivial right-hand side.

\begin{example}
Consider the Lie $p$-presentation of $L$ having the following non-trivial 
relations: 
\[ pl_1=l_2,\qquad pl_2=l_3 \qquad [l_1,l_2]=l_3.  \]
Since $[l_1,l_3]=[l_2,l_3]=0$,
all Jacobi expressions vanish. The only non-trivial compatibility
relation is
$P(1,1)
=p[l_1,l_1]-[pl_1,l_1]
=-[l_2,l_1]
=l_3$.
Hence, $RF(P(1,1))=l_3$, and therefore consistency forces $l_3=0$. With this relation imposed, the
presentation reduces to two generators $\{l_1, l_2\}$, yielding a
cyclic Lie $p$-ring of order $p^2$.
\end{example}

\begin{example}
Consider the Lie $p$-presentation $\mathcal{P}(A)$ 
having the following non-trivial relations:
\[
\begin{aligned}
[l_1,l_2]&=-l_3,\quad [l_1,l_3]=l_5,\quad [l_2,l_3]=l_6,\\
[l_1,l_6]&=l_7,\quad pl_1=l_4,\quad pl_2=l_3,\quad pl_4=l_5-l_6.
\end{aligned}
\] The relevant consistency relations are
\[
J(1,2,3)=l_7,\qquad
P(2,1)=-l_5,\qquad
P(2,2)=-l_6.
\] Therefore \[
RF(J(1,2,3))=l_7,\qquad
RF(P(2,1))=-l_5,\qquad
RF(P(2,2))=-l_6.
\] Thus, consistency forces $l_5=l_6=l_7=0$. Consequently,
$pl_4=l_5-l_6=0$.  This yields
the presentation 
\[
[l_2,l_1]=l_3,\qquad
pl_1=l_4,\qquad
pl_2=l_3
\]
which is consistent and defines a Lie $p$-ring of size $p^4$.
\end{example}

\begin{example}
Let $\mathcal{P}(A)$ be the symbolic Lie $p$-presentation with 
$U=\langle x_1\rangle$.
The non-trivial defining relations are \[
\begin{aligned}
pl_1&=l_4,\quad pl_2=x_1l_5+l_6,\quad [l_2,l_1]=l_3,\quad [l_3,l_1]=l_4,\\
[l_3,l_2]&=l_5,\quad [l_4,l_2]=l_6+x_1l_7,\quad [l_5,l_1]=l_6.
\end{aligned}
\] The relevant consistency relations are
\[
J(1,2,3)=-x_1l_7,\qquad
P(1,2)=-l_6-x_1l_7.
\]
Hence, $RF(J(1,2,3))=-x_1l_7$. Since $x_1\in U$, it is invertible in $R_\ell(U)$. Thus consistency
implies $l_7=0$. Substituting this into $P(1,2)$, we obtain
$RF(P(1,2))=-l_6$,
and hence $l_6=0$. After eliminating $l_6$ and $l_7$,
we obtain the following consistent symbolic Lie \(p\)-presentation
\[
pl_1=l_4,\qquad
pl_2=x_1l_5,\qquad
[l_2,l_1]=l_3,\qquad
[l_3,l_1]=l_4,\qquad
[l_3,l_2]=l_5.
\]
This consistent symbolic Lie \(p\)-presentation defines a family of Lie
\(P\)-rings of order \(P^5\). In general, distinct parameter values may
yield isomorphic Lie \(P\)-rings. For instance, when \(P=5\), the Lie
\(P\)-rings obtained by evaluating \(x_1\) at \(2\) and \(3\) are
isomorphic.
\end{example}

\begin{example}
Let $U=\langle x_1,x_1-2\rangle$ and consider the
symbolic Lie $p$-presentation \[
\begin{aligned}
pl_2&=x_1l_5+l_6,\quad pl_3=l_4+l_5,\quad [l_2,l_1]=l_4,\quad [l_3,l_1]=l_5,\\
[l_5,l_2]&=(x_1-2)l_6,\quad [l_4,l_3]=x_1l_7.
\end{aligned} 
\] The relevant consistency relations are
\[
P(3,2)=-(x_1-2)l_6,\qquad
J(1,2,3)=-(x_1-2)l_6+x_1l_7.
\]
Hence $RF(P(3,2))=-(x_1-2)l_6.$
Since $x_1-2\in U$, it is invertible in $R_\ell(U)$, and therefore
$l_6=0$. Substituting this into the Jacobi relation gives $RF(J(1,2,3))=x_1l_7$.
As $x_1\in U$, it is also invertible in $R_\ell(U)$, and hence
$l_7=0$.
Eliminating the generators $l_6$ and $l_7$ yields the consistent symbolic
Lie $p$-presentation on five generators 
\[
pl_2=x_1l_5,\qquad
pl_3=l_4+l_5,\qquad
[l_2,l_1]=l_4,\qquad
[l_3,l_1]=l_5.
\]
\end{example}

\section{Computing Hermite normal forms}\label{hnf} 

Let $p$ be an indeterminate, let $\ell \in \N$, and let $w, x_1, \ldots, 
x_m$ be commuting indeterminates. As before, we write $R_\ell = 
\Z[1/\ell][w, x_1, \ldots, x_m]$. Let $U\subseteq R_\ell\setminus \{0\}$ 
be a multiplicatively closed subset and $Z$ an ideal of $R_\ell$. Write 
$R_\ell(U)$ for the localization of $R_\ell$ at $U$.

Recall from \ref{symb} that an element $f/g \in R_\ell(U)$ can be evaluated 
at a prime $P$ with $\gcd(P, \ell) = 1$ and $X \in \Sigma_P(U,Z)$ via 
$f(W,X)/g(W,X) \bmod P$. Similarly, a vector or a matrix of elements in 
$R_\ell(U)$ can be evaluated by evaluating every entry.

We consider a free $R_\ell(U)$-module $V = R_\ell(U)^d$ and assume that 
we are given a collection of elements $v_1, \ldots, v_k$ in $V$. Let $M$ 
be the matrix with rows $v_1, \ldots, v_k$. Our aim is to determine an 
integer $\ell' \geq \ell$ and a finite collections of tuples 
$\{ (M_i, U_i, Z_i) \mid i \in I \}$ with $U \subseteq U_i$, 
$Z \subseteq Z_i$, and for each prime $P$ with $\gcd(P, \ell') = 1$
and each $X \in \Sigma_P(U_i, Z_i)$ it follows that the evaluated matrix 
$M_i(X)$ is a basis for the row space of $M(X)$.

We use a variation of the Gaussian elimination on $M$. Our aim is to 
determine a reduced upper echelonised form for $M$ with leading 
coefficients $1$ in each non-zero row. More precisely, there exist 
$c_1 < c_2 < \cdots < c_s$ so that $m_{i,c_i} = 1$ and $m_{j, c_i} = 0$ 
for $j < i$ and $m_{i,k} = 0$ for $k < c_i$. Our algorithm proceeds by 
induction along the columns of $M$. We describe the procedure for the 
first column:

\begin{items}
\item[(1)]
If all entries in the first column in $M$ are zero, then return.
\item[(2)]
For all entries $f/g$ in $M$ reduce $f$ additively modulo $Z$.
\item[(3)]
If there is a non-zero entry in the first column, then permute rows
of $M$ so that this entry is at position (1,1). 
\item[(4)]
If $a_{1,1} = a/b \in \Q$ with $\gcd(a,b)=1$, then 
\begin{items}
\item[(a)]
Modify $\ell' = \operatorname{lcm}(\ell, a)$. 
\item[(b)]
Divide the first row by $a_{1,1}$.
\item[(c)]
Clear out $a_{j,1}$ for $j > 1$ with row operations.
\item[(d)]
return. 
\end{items}
\item[(5)]
If $a_{1,1} = f/g \in R_\ell(U)$, then split into cases:
\begin{items}
\item[Case (I)]
Modify $(U,Z)$ to $(U \cup \{f\},Z)$, divide the first row by $f/g$
and goto (4c).
\item[Case (II)]
Modify $(U,Z)$ to $(U, Z \cup \{f\})$, set $a_{1,1} = 0$ and goto (2).
\end{items}
\end{items}

For practicality, it is an advantage to arrange Step (3) so that 
non-zero integral elements are preferred. Of course, this is not 
always possible.

After completing the algorithm for the first column, the next columns
are processed similarly as in the standard Gaussian elimination algorithm.
The following theorem is immediate. 

\begin{theorem}
Let $(M, U, Z)$ and $\ell$ be the input of the algorithm, and let 
$\{ (M_i, U_i, Z_i) \mid i \in I\}$ and $\ell'$ be the output.
\begin{items}
\item[\rm (1)]
The algorithm terminates and returns with a finite number of cases;
that is, the index set $I$ is finite.
\item[\rm (2)]
In each output triple $(M_i,U_i,Z_i)$, the matrix $M_i$ evaluates to a 
reduced echelon form for each prime $P$ with $\gcd(P, \ell') = 1$ and 
each $X \in \Sigma_P(U_i,Z_i)$.
\item[\rm (3)]
If $X \in \Sigma_P(U,Z)$ for some prime $P$ with $\gcd(P, \ell') = 1$, 
then there exists $i \in I$ with $X \in \Sigma_P(U_i, Z_i)$.
\end{items}
\end{theorem}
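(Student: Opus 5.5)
We will prove the three statements separately. Each relies on an invariant that the column-by-column elimination maintains. Throughout, a \emph{branch} of the computation is a sequence of choices between Case (I) and Case (II) in Step (5). Its state is a triple $(M', U', Z')$ together with the current value of $\ell'$. The key invariant we carry along every branch is: for each prime $P$ with $\gcd(P,\ell')=1$ and each $X\in\Sigma_P(U',Z')$, the row space of $M'(X)$ equals the row space of $M(X)$. This invariant holds at the start. We check that every step preserves it.

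Step (2) changes each numerator $f$ only by an element of $Z'$, and such elements vanish at every $X\in\Sigma_P(U',Z')$, so the evaluations are unchanged. Step (3) is a row permutation. Steps (4b) and Case (I) divide a row by a scalar. In (4) the scalar is $a/b$; its numerator $a$ is now a divisor of $\ell'$, and $b$ is invertible (it is a unit in $\Z[1/\ell]$, or else coprime to $P$ by the construction of $R_\ell$ entries). In Case (I) the numerator $f$ has been added to $U'$, so $f(W,X)\not\equiv 0 \bmod P$. Hence the scalar evaluates to a unit of $\Z_P$. Step (4c) consists of elementary row operations. Case (II) replaces $a_{1,1}$ by $0$ after adding $f$ to $Z'$, so on $\Sigma_P(U',Z'\cup\{f\})$ the evaluation is unchanged. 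Enlarging $U'$ or $Z'$, or enlarging $\ell'$, only shrinks the set of admissible $(P,X)$, so the invariant persists.

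\textbf{(3)} This is the covering property, proved by induction along the branching. At each split, any admissible $X\in\Sigma_P(U',Z')$ satisfies either $f(W,X)\not\equiv 0$ or $f(W,X)\equiv 0 \pmod P$. It therefore lies in $\Sigma_P(U'\cup\{f\},Z')$ or in $\Sigma_P(U',Z'\cup\{f\})$. (Strictly, the multiplicatively closed set generated by $U'\cup\{f\}$ is meant; nonvanishing mod $P$ is preserved under products since $\Z_P$ is a field.) Following the correct branch at every split yields a leaf $i$ with $X\in\Sigma_P(U_i,Z_i)$.

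\textbf{(2)} At a leaf, the matrix $M_i$ has the echelon shape with pivots equal to $1$ and zeros below them, as produced by the usual Gaussian elimination run column by column. Evaluating entrywise preserves the entries $0$ and $1$. Together with the invariant, $M_i(X)$ is therefore a reduced echelon form whose nonzero rows give a basis of the row space of $M(X)$. To get zeros above the pivots we also clear upwards in (4c), as in the standard reduced form.

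\textbf{(1)} Termination is the main obstacle, specifically the loop Case (II) $\to$ (2). Each Case (II) adds a nonzero $f$ (nonzero modulo the current $Z'$, since $f$ was already reduced in Step (2)) to $Z'$. The ideal therefore strictly increases: $Z'\subsetneq Z'+(f)$. Since $R_\ell$ is Noetherian (Hilbert's basis theorem applied to $\Z[1/\ell]$), no branch can perform infinitely many Case (II) steps. This needs care: the entry $f$ must be reduced to a normal form modulo $Z'$, for instance via a Gröbner basis over $\Z[1/\ell]$, so that ``nonzero after reduction'' really means $f\notin Z'$. Within one column, each Case (I) or Step (4) ends the processing of that column. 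There are finitely many columns, and each node has at most two children. By König's lemma, the branching tree is finite, so $I$ is finite and $\ell'$ is obtained after finitely many lcm updates.
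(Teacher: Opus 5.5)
Your proof is correct. For (2) and (3) it follows the paper's argument: an output matrix in reduced echelon form with pivots $1$ stays in that form under evaluation, and each split adds one $f$ to $U$ on one side and to $Z$ on the other, so every admissible $X$ survives in some branch. You also add the invariant that, on the admissible set, the row space of $M'(X)$ equals that of $M(X)$. The paper leaves this implicit, yet it is what justifies the stated purpose of the algorithm, namely that $M_i(X)$ is a basis of the row space of $M(X)$; the paper's proof of (2) only addresses the echelon shape.

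The genuine difference is in (1). The paper simply asserts that each column is cleared in finitely many steps with finitely many cases, and then inducts on the columns. You instead bound the Case (II) loop by the ascending chain condition in the Noetherian ring $R_\ell$ and finish with K\"onig's lemma. That works, but, as you note yourself, it requires Step (2) to compute a canonical normal form modulo $Z$ (e.g.\ via a Gr\"obner basis over $\Z[1/\ell]$). Otherwise an $f$ already in $Z'$ could survive reduction, and the chain of ideals would not strictly increase.

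A simpler measure removes that hypothesis. Each Case (II) pass sets a nonzero entry of the current column to $0$. No step inside the loop (reduction, permutation) turns a zero entry into a nonzero one, and no row operations occur until the column is finished. So the number of nonzero entries in the column strictly drops, giving at most $k$ Case (II) splits per column. This proves termination for any implementation of the reduction modulo $Z$. It also gives an explicit bound $|I| \le (k+1)^d$, since each of the $d$ columns has at most $k+1$ outcomes. This is presumably what the paper's one-line argument has in mind.
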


\begin{proof}
(1) The first column is cleared after finitely many steps with finitely
many cases. By induction on the number of columns, the result follows. \\
(2) The matrix $M_i$ is in reduced echelon form. Hence its evaluations are
also in reduced echelon form for all possible $P$ and $X$. \\
(3) Each splitting step considers a single $f$ and adds it to $U$ on the
one side (and then allows to divide by $f$) and adds it to $Z$ on the 
other side (and then considers it as zero). As $f(W,X) \bmod P$ is either
non-zero or zero, this yields the result.
\end{proof}

We illustrate the algorithm with the following example.

\begin{example}
Let $\ell = 1$ and $m = 1$, so that $R = \Z[w,x_1]$. Let $U = 
\langle w \rangle$ and $Z = \{0\}$. Consider the matrix $M=\begin{pmatrix}x_1&0&0\\
   0&2&1 
\end{pmatrix}$, whose rows are $v_1 = (x_1,0,0)$ and $v_2 = (0,2,1)$.
 The algorithm branches into two
case distinctions based on the parameter $x_1$:
\begin{items}
\item[(1)] 
Case $x_1 = 0$. In this case, we modify $\ell'= 2$ and
$Z_1 = \langle Z, x_1 \rangle$. We then obtain $M_1=\begin{pmatrix}
    0& 1& 1/2
\end{pmatrix}$ with $U_1=U$ and $Z_1=\langle Z, x_1 \rangle$. For every prime $P\neq 2$ and every $X \in \Sigma_P(U_1, Z_1)$, the row of $M_1(X)$ forms a basis for the row space of $M(X)$. 

\item[(2)] 
Case $x_1 \neq 0$. In this case, we modify $\ell'= 2$
and $U_2 = \langle U, x_1 \rangle$. Since $x_1$ is now invertible, the resulting matrix is $M_2=\begin{pmatrix}1&0&0\\
   0&1&1/2
\end{pmatrix}$ with $U_2=\langle U, x_1 \rangle$ and $Z_2=Z$. For every prime $P\neq 2$ and every $X \in \Sigma_P(U_2, Z_2)$, the rows of $M_2(X)$ forms a basis for the row space of $M(X)$. 
\end{items}
This example demonstrates that the case distinctions are necessary
to get a full overview of the possible results.
\end{example}

\section{The Lie $p$-cover}

Let $L$ be a Lie $p$-ring, and let $d = d(L)$ be its minimal generator 
number. Let $\Lambda(L)$ denote the smallest non-trivial ideal in the
lower $p$-central series of $L$.

Then there exists a presentation $F/R \cong L$, where $F$ is free Lie ring on
$d$ generators, and $R$ is the ideal of defining relations of $L$. The quotient $L^* = F / ([R,F] + pR)$ is called the Lie $p$-cover of 
$L$. As in the group-theoretic setting, the isomorphism 
type of $L^*$ is independent of the chosen presentation $F/R$. 

Since $L$ is finite, the Lie $p$-cover $L^*$ is also finite, its ideal 
$M(L) = R/([R,F] + pR)$ is called the multiplicator of $L$. If $L$ has 
$p$-class $c$, then $N(L) = \Lambda_{c+1}(L^*) \leq M(L)$; the ideal
$N(L)$ is said to be the nucleus of $L$. 
A Lie $p$-ring $H$ is called a descendant of $L$ if $H / \Lambda(H) \cong L$ holds.

The following theorem, which is the Lie-theoretic analogue of the corresponding result for finite $p$-groups, describes the role of the Lie $p$-cover $L^*$ of Lie $p$-ring $L$ in constructing descendants.
\begin{theorem}
Let $L$ be a Lie $p$-ring with Lie $p$-cover $L^*$, multiplicator 
$M(L)$, and nucleus $N(L)$.
\begin{items}
\item[\rm (a)]
If $H$ is a descendant of $L$, then there exists $S \leq M(L)$ 
supplementing $N(L)$ in $M(L)$ such that $L^* / S \cong H$. 
\item[\rm (b)]
If $S$ is a supplement to $N(L)$ in $M(L)$, then $L^* / S$ is a
descendant of $L$.
\item[\rm (c)]
The automorphism group $\mathrm{Aut}(L)$ acts as subgroup of $\mathrm{Aut}(M(L))$ on $M(L)$. 
\item[\rm (d)]
Let $S_1, S_2$ be two supplements to $N(L)$ in $M(L)$. Then $L^*/S_1
\cong L^*/S_2$ if and only if there exists $\alpha \in \mathrm{Aut}(L)$ with
$S_{1}^\alpha = S_2$. 
\end{items}
\end{theorem}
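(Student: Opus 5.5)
We follow the group-theoretic proof for $p$-groups (O'Brien's $p$-group generation), translated to Lie rings. Throughout, fix a presentation $F/R\cong L$ with $F$ free on $d=d(L)$ generators, and write $R^* = [R,F]+pR$, so that $L^* = F/R^*$ and $M(L) = R/R^*$. The basic observation is that $M(L)$ is central in $L^*$ and $pM(L)=0$, since $[R,F]+pR\le R^*$. Hence $L^*$ is a central extension of $L$ by an elementary abelian group, and $M(L)\le \Lambda_2(L^*)$. For the latter, $R\le \Lambda_2(F)$ because $d$ is minimal: otherwise $L/\Lambda_2(L)$ would need fewer than $d$ generators. Let $c$ be the $p$-class of $L$.

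\textbf{Step 1: universal property.} Let $H$ be a Lie $p$-ring with an ideal $K$ such that $[K,H]+pK=0$, $K\le\Lambda_2(H)$, and $H/K\cong L$. We show $H$ is an epimorphic image of $L^*$ by a map carrying $M(L)$ onto $K$. Choose preimages in $H$ of the images of the free generators of $F$; these generate $H$, because $K\le\Lambda_2(H)$ and a Lie $p$-ring is generated by any elements generating it modulo $\Lambda_2$ (nilpotency, via the Burnside-basis argument for the lower $p$-central series). The induced epimorphism $\phi\colon F\to H$ satisfies $\phi(R)\le K$. Since $K$ is central of exponent $p$, we get $\phi(R^*)=0$. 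Thus $\phi$ factors through $L^*$, with $M(L)\mapsto K$, and in fact onto $K$ by comparing orders with $H/K\cong L$.

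\textbf{Step 2: parts (a) and (b).} For (a), let $H$ be a descendant with $K=\Lambda(H)=\Lambda_{c+1}(H)$. Then $K$ satisfies the hypotheses of Step 1. Let $S=\ker(L^*\to H)\le M(L)$. Since the map preserves lower $p$-central terms, it sends $\Lambda_{c+1}(L^*)=N(L)$ onto $\Lambda_{c+1}(H)=K$. It also sends $M(L)$ onto $K$. Hence $N(L)+S=M(L)$, so $S$ supplements $N(L)$. For (b), let $H=L^*/S$. Then
\[\Lambda_{c+1}(H)=(N(L)+S)/S=M(L)/S,\]
and $H/\Lambda_{c+1}(H)\cong L^*/M(L)\cong L$. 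Moreover $\Lambda_{c+2}(H)=0$, because $M(L)$ is central of exponent $p$. So $\Lambda(H)=M(L)/S$, which is nontrivial exactly when $S$ is proper; if $S=M(L)$ we recover $L$ itself, and we treat this as an allowed or degenerate case as the paper's convention requires.

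\textbf{Step 3: parts (c) and (d).} For (c), given $\alpha\in\mathrm{Aut}(L)$, lift it to an endomorphism $\tilde\alpha$ of $F$ with $\tilde\alpha(R)\le R$; this uses the freeness of $F$. Then $\tilde\alpha(R^*)\le R^*$, so $\tilde\alpha$ induces an endomorphism $\alpha^*$ of $L^*$. It is surjective, since it is onto modulo $\Lambda_2(L^*)$, and hence it is an automorphism because $L^*$ is finite. The restriction $\alpha^*|_{M(L)}$ is independent of the lift: two lifts differ by a map $F\to R$ on generators, and this changes the image of $R\le\Lambda_2(F)$ only by elements of $[R,F]+pR$. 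One checks this by writing an element of $R$ as a combination of brackets and $p$-multiples, and bilinearity gives the claim. This gives a homomorphism $\mathrm{Aut}(L)\to\mathrm{Aut}(M(L))$, and $N(L)$ is invariant under it. We read ``acts as subgroup'' as this induced action; proving injectivity is not needed.

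For (d), if $S_1^\alpha=S_2$, then $\alpha^*$ induces an isomorphism $L^*/S_1\cong L^*/S_2$. Conversely, let $\psi\colon L^*/S_1\to L^*/S_2$ be an isomorphism. It maps $\Lambda(L^*/S_1)=M(L)/S_1$ onto $M(L)/S_2$ and hence induces some $\alpha\in\mathrm{Aut}(L)$. Lift $\psi$ to an endomorphism of $F$ lying over $\alpha$. The uniqueness argument from (c) shows that the induced map $\alpha^*$ agrees with the map induced by $\psi$ on $M(L)$ modulo the relevant identifications, and this forces $S_1^{\alpha^*}=S_2$.

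\textbf{Main obstacle.} The step we expect to be hardest is the independence-of-lift claim in (c), together with its use in (d). Everything else is formal from Step 1.
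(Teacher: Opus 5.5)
The paper gives no proof of this theorem; it only presents it as the Lie-ring analogue of O'Brien's $p$-group result, and your proposal is that group-theoretic argument transplanted to Lie rings, which is correct. In particular, the universal property, the supplement computation via $\Lambda_{c+1}$, and the lift-independence in (c) (from $R\le\Lambda_2(F)$ and bilinearity) all go through; in (d), the lift of $\psi$ is itself a lift of $\alpha$, so with $q_i\colon L^*\to L^*/S_i$ one gets $q_2\alpha^*=\psi q_1$, hence $\alpha^*(S_1)\subseteq S_2$, with equality by counting. Your two caveats are genuine imprecisions in the statement rather than gaps in your argument: (b) needs $S$ to be a proper supplement, since $L^*/M(L)\cong L$ is not a descendant of $L$, and the induced map $\mathrm{Aut}(L)\to\mathrm{Aut}(M(L))$ need not be injective.
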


\subsection{Computing $L^*$}
\label{ordcov}

We first consider the case that a Lie $p$-ring $L$ is given by a consistent 
Lie $p$-presentation $\P(A)$ on generators $l_1, \ldots, l_n$. Recall that 
$\mathcal{P}(A)$ has defining relations
\begin{eqnarray*}
 [l_i, l_j] &=& \sum_{k=j+1}^n a_{i,j,k} l_k 
                \mbox{ for } 1 \leq i < j \leq n, \\
  pl_i      &=& \sum_{k=i+1}^n a_{i,i,k} l_k 
                \mbox{ for } 1 \leq i \leq n.
\end{eqnarray*}
We assume that the presentation is {\em weighted} and has {\em definitions}. 
We recall the meaning of these notations as follows.
\bigskip

{\bf Weighted:} A Lie $p$-presentation is called {\em weighted} if each 
generator $l_i$ is assigned a positive integer $w_i$, called its {\em weight},
such that $l_i \in \Lambda_{w_i}(L)\setminus \Lambda_{w_i+1}(L)$. Moreover, 
the series $L=U_1>U_2>\cdots$ obtained by $U_i=\langle l_i,\ldots,l_n\rangle$
refines the lower $p$-central series $(\Lambda_i(L))_{i\geq 1}$ of $L$.
\medskip

{\bf Definitions:} The presentation of $L$ can be structured such that for 
every generator $l_i$ of weight $w_i \geq 2$, there a relation whose 
right-hand side is of the form $u l_i$, where $u \in \Z_p^\times$ is a 
unit. This can be a commutator relation; in this case the relation reads 
$[l_j, l_k] = u l_i$ (where $j<k<i$) or a multiplication relation so that 
$p l_j = u l_i$ (where $j<i$). One of these relations is taken as 
the {\em definition} of $l_i$.  (In most cases, the unit  $u$ is $1$.)
\medskip

A consistent Lie $p$-presentation for the $p$-cover $L^*$ is obtained by extending
the presentation of $L$. As an initial step, we introduce a presentation
$\widehat{\mathcal{P}}(A)$ over generating set $l \cup t$, where $l = 
\{l_1, \ldots, l_n\}$ and $t = \{t_{i,j} \mid 1 \leq i \leq j \leq n\}$ 
represents a collection of new generator, called {\em tails}. The relations 
for $\widehat{\mathcal{P}}(A)$  are given by
\begin{eqnarray*}
 [l_i, l_j] &=& \sum_{k=j+1}^n a_{i,j,k} l_k + t_{i,j}
                \mbox{ for } 1 \leq i < j \leq n, \\
  pl_i      &=& \sum_{k=i+1}^n a_{i,i,k} l_k + t_{i,i}
                \mbox{ for } 1 \leq i \leq n, \\
 {[x, t_{i,j}]} &=& 0 \mbox{ for } 1 \leq i\leq j \leq n
                     \mbox{ and } x \in l \cup t, \\
  p t_{i,j} &=& 0 \mbox{ for } 1 \leq i \leq j \leq n.
\end{eqnarray*}

We note that
\begin{items}
\item[$\bullet$]
If $[l_i, l_j]$ is a definition, then we set $t_{i,j} = 0$.
(In this case $t_{i,j}$ is redundant, since the relation $[l_i,l_j]
= u l_k + t_{i,j}$ can be modified to $[l_i, l_j] = u l_k'$ by 
substituting $l_k' = l_k + \frac{1}{u} t_{i,j}$.)
\item[$\bullet$] 
If $p l_i$ is a definition, then we set $t_{i,i} = 0$ for the same reason.
\item[$\bullet$]
If $L$ has $p$-class $c$, then $L^*$ has $p$-class at most $c+1$. Hence, 
we only need generators of weight at most $c+1$, and we set $t_{i,j} = 0$ 
whenever $w_i + w_j > c+1$.
\end{items}

\begin{theorem}
\label{cover}
$\widehat{\mathcal{P}}(A)$ is a Lie $p$-presentation for $L^*$.
\end{theorem}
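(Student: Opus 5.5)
We read the statement as follows: the Lie ring $\widehat L$ defined by $\widehat{\mathcal{P}}(A)$ is isomorphic to $L^* = F/([R,F]+pR)$. Here $F$ is free on $d = d(L)$ generators.

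The plan is to construct surjections in both directions and compare them, using the definitions to tie everything to the generators of weight $1$. Since $L/\Lambda_2(L)$ has dimension $d$, the weight-one generators are $l_1,\ldots,l_d$. Every other $l_i$ is, by its definition, a unit multiple of a bracket or a $p$-multiple of earlier generators. Inductively, each $l_i$ is therefore a fixed Lie-ring word $\omega_i(l_1,\ldots,l_d)$. Let $F$ be free on $f_1,\ldots,f_d$, and let $F\to L$ send $f_i\mapsto l_i$, with kernel $R$.

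\textbf{Step 1 (map from $L^*$ to $\widehat L$).} In $\widehat L$, the elements $l_1,\ldots,l_d$ generate everything. The $l_i$ of higher weight are given by the definitions. The tails $t_{i,j}$ are recovered as $[l_i,l_j]-\sum_k a_{i,j,k}l_k$ and $pl_i-\sum_k a_{i,i,k}l_k$. Hence $f_i\mapsto l_i$ gives an epimorphism $\phi:F\to\widehat L$.

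The relations of $\mathcal P(A)$, rewritten through the words $\omega_i$, generate $R$ as an ideal. Under $\phi$ these relators go to tails. Tails are central and killed by $p$, so $\phi(R)$ is central of exponent $p$, and therefore $\phi([R,F]+pR)=0$. Thus $\phi$ induces an epimorphism $L^*\to\widehat L$.

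\textbf{Step 2 (map from $\widehat L$ to $L^*$).} Set $\hat l_i=\omega_i(f_1,\ldots,f_d)+([R,F]+pR)\in L^*$. Define $\hat t_{i,j}\in L^*$ as the discrepancy $[\hat l_i,\hat l_j]-\sum_k a_{i,j,k}\hat l_k$, and similarly $\hat t_{i,i}=p\hat l_i-\sum_k a_{i,i,k}\hat l_k$. Each such element is the image of an element of $R$, so it lies in $M(L)$, which is central and of exponent $p$.

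For a defining relation, the discrepancy is $0$ by the choice of $\omega_i$. This matches the convention that $t_{i,j}=0$ in that case.

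For $w_i+w_j>c+1$, we argue that $\hat t_{i,j}\in\Lambda_{c+2}(L^*)$, which is $0$ because $L^*$ has $p$-class at most $c+1$. Here $[\hat l_i,\hat l_j]\in\Lambda_{w_i+w_j}(L^*)$. The weighted normal form of $[l_i,l_j]$ in $L$ involves only generators of weight at least $w_i+w_j$, so the right-hand side also lies in $\Lambda_{w_i+w_j}(L^*)$. Hence $\hat t_{i,j}\in\Lambda_{c+2}(L^*)=0$.

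Consequently all relations of $\widehat{\mathcal P}(A)$ hold in $L^*$, and $l_i\mapsto\hat l_i$, $t_{i,j}\mapsto\hat t_{i,j}$ defines a homomorphism $\widehat L\to L^*$. It is onto because the $\hat l_1,\ldots,\hat l_d$ generate $L^*$.

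\textbf{Step 3 (conclusion).} The two composites fix the generators $l_1,\ldots,l_d$, respectively $f_1,\ldots,f_d$ modulo $[R,F]+pR$, so they are the identity. Hence $\widehat L\cong L^*$.

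By Lemma~\ref{lem1}, $\widehat{\mathcal P}(A)$, ordered as $l$ followed by $t$, is a Lie $p$-presentation. We must also check that it has the triangular shape required: each tail lies beyond all $l_k$, which holds after reordering the generators.

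\textbf{Main obstacle.} The delicate point is Step 2. There we must check that the chosen definitions really express every $l_i$ as a word in the weight-one generators compatibly with all the relations, so that the induced discrepancies vanish exactly where the tails were set to $0$.

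The weight cut-off also needs care. Its justification requires that the normal form of $[l_i,l_j]$, or of $pl_i$ when $i=j$, involves only generators of weight at least $w_i+w_j$, respectively $w_i+1$. I would prove this from the weighted property by induction along the lower $p$-central series.

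If the matching of tails with $M(L)$ turns out awkward, I would fall back on an alternative route. One can build $L^*$ generator by generator from $L$ as iterated cyclic extensions via Theorem~\ref{thm4}, mirroring the proof of Theorem~\ref{thm5}. One then compares orders using $|L^*|=|L|\cdot|M(L)|$.
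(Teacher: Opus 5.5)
Your overall strategy differs from the paper's and is more complete. The paper builds, for an arbitrary central extension $0\to N\to E\to L\to 0$ with $pN=0$, a map $\widehat L\to E$ over $L$, and then appeals to ``the universal property'' of $L^*$. It never uses that $\widehat L$ is generated by $l_1,\dots,l_d$, and it never checks that the tails it has set to $0$ are sent to $0$. Your Step 1 (surjectivity of $F\to\widehat L$ via the definitions, and $\phi(R)$ lying in the central span of the tails) supplies the first point. Your treatment of definition tails in Step 2 supplies the second. Steps 1 and 3 are fine. (Since $F$ is a Lie ring over $\Z$, a unit $u$ in a definition has to be inverted by an integer $u'$ with $uu'\equiv 1$ modulo a power of $p$ annihilating both $\widehat L$ and $L^*$.)

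The gap is the weight cut-off in Step 2. You assert $[\hat l_i,\hat l_j]\in\Lambda_{w_i+w_j}(L^*)$ and $\sum_k a_{i,j,k}\hat l_k\in\Lambda_{w_i+w_j}(L^*)$. Both need $\hat l_k\in\Lambda_{w_k}(L^*)$, which is not automatic: a preimage of $l_k$ is only determined modulo $M(L)$, and in general $M(L)\not\subseteq\Lambda_{w_k}(L^*)$. You get $\omega_k\in\Lambda_{w_k}(F)$ by induction from $[\Lambda_a(F),\Lambda_b(F)]\subseteq\Lambda_{a+b}(F)$ and $p\Lambda_a(F)\subseteq\Lambda_{a+1}(F)$. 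However, this induction only works if the definitions are weight-compatible: $w_i=w_j+w_k$ for $[l_j,l_k]=ul_i$, and $w_i=w_j+1$ for $pl_j=ul_i$. The paper's notion of definition does not impose this.

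For example, take $L=\Z/p^3\,l_1\oplus\Z/p\,l_2$ with $[l_1,l_2]=p^2l_1$ and basis $l_1,l_2,l_3=pl_1,l_4=p^2l_1$ of weights $1,1,2,3$. Here $[l_1,l_2]=l_4$ is admissible as the definition of $l_4$. But then $\hat l_4=[\hat l_1,\hat l_2]\notin\Lambda_3(L^*)$, because $[\hat l_1,\hat l_2]$ differs from $p^2\hat l_1$ by a nonzero element of $M(L)$ outside $\Lambda_3(L^*)=\langle p^2\hat l_1\rangle$.

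There is a second problem with power tails. For $i=j$ your argument only gives $\hat t_{i,i}\in\Lambda_{w_i+1}(L^*)$, not $\Lambda_{2w_i}(L^*)$. So Step 2 cannot discard $t_{i,i}$ under the rule $w_i+w_i>c+1$, and that rule is in fact false. For $L=\Z/p^2$ with basis $l_1$, $l_2=pl_1$ and $c=2$, the rule discards $t_{2,2}$, giving $\widehat L\cong\Z/p^2$ although $L^*\cong\Z/p^3$. You hinted at this with ``respectively $w_i+1$'', but Step 2 as written uses $w_i+w_j$ throughout.

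You must therefore make two corrections:
\begin{itemize}
\item assume (or arrange) weight-compatible definitions;
\item apply the cut-off only to $t_{i,j}$ with $i<j$. Since $w_i\le c$, the correct weight $w_i+1$ of a power tail never exceeds $c+1$, so no power tail is ever cut.
\end{itemize}
With these, your argument goes through.
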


\begin{proof}
Let $L$ be a Lie $p$-ring with consistent presentation
$\mathcal{P}(A)$, and let $\hat{L}$ be the Lie $p$-ring defined by the 
presentation $\widehat{\mathcal{P}}(A)$. We show that $\hat{L} \cong L^*$.
Define a map $\pi : \hat{L} \to L$
by $\pi(l_i) = l_i$ for $1 \leq i \leq n$ and $\pi(t_{i,j}) = 0$ for 
all $i,j$.  Then $\pi$ respects the defining relations of 
$\widehat{\mathcal{P}}(A)$, since these reduce to the defining relations 
of $\mathcal{P}(A)$ in $L$. Hence $\pi$ is a well-defined surjective 
homomorphism. 

Let $T = \ker(\pi)$. Then $T$ is generated by the elements $t_{i,j}$. 
From the defining relations, we have
$[x, t_{i,j}] = 0 \;\text{for all generators } x \in l \cup t$,
hence $[\hat{L}, T]=0$, so $T$ is a central ideal of $\hat{L}$. Moreover,
$pT = 0$. Thus, we obtain an extension
$0 \longrightarrow T \longrightarrow \hat{L} \xrightarrow{\pi} L 
   \longrightarrow 0$
with $[\hat{L},T]=0$ and $pT=0$.
\medskip

\noindent
Let $0 \to N \to E \xrightarrow{\psi} L \to 0$ be any central extension 
with $pN=0$.  Choose  a preimage $\tilde{l}_i \in E$ of $l_i$. Since $\psi$ 
is a homomorphism, the image of the relations of $L$ in $E$ must lie in 
the kernel $N$. Specifically 
\[
[\tilde{l}_i,\tilde{l}_j] = \sum_{k=j+1}^n a_{i,j,k} \tilde{l}_k + n_{i,j},
\quad n_{i,j} \in N,
\]
\[
p\tilde{l}_i = \sum_{k=i+1}^n a_{i,i,k} \tilde{l}_k + n_{i,i},
\quad n_{i,i} \in N.
\]
Define a homomorphism $\varphi : \hat{L} \to E$ by
\[
\varphi(l_i) = \tilde{l}_i, \quad \varphi(t_{i,j}) = n_{i,j}.
\]
Since $N$ is central and $pN=0$, all defining relations of 
$\widehat{\mathcal{P}}(A)$ are satisfied in $E$, and hence $\varphi$ 
is a well-defined homomorphism, and by construction $\psi \circ \varphi = \pi$. A central extension $\hat{L}$ satisfies the universal property of the 
$p$-cover $L^*$, so $\hat{L} \cong L^*$. Therefore, $\widehat{\mathcal{P}}(A)$ 
is a Lie $p$-presentation for the $p$-cover $L^*$.
\end{proof}

Theorem \ref{cover} effectively allows to determine a Lie $p$-presentation
for $L^*$ from a consistent Lie $p$-presentation of $L$. It remains to 
make the resulting presentation for $L^*$ consistent. To achieve this, we use Theorem \ref{thm5} and evaluate the power reduced forms described there. 
Since the presentation of $L$ is consistent, each power reduced form 
determines an element of $T = \langle t_{i,j} \mid 1 \leq i \leq j \leq n
\rangle$. By construction, $T$ is a vector space with basis $t=\{ t_{i,j}
\mid 1 \leq i \leq j \leq n \}$, and expressing each power reduced form with respect to basis $t$ yields a collection of coordinate vectors 
$v_1, \ldots, v_k$ in $T$. Factoring out the subspace generated by these vectors yields the quotient presentation
\[ \widehat{\P}(A) / \langle v_1, \ldots, v_k \rangle,\]
which is the desired consistent Lie $p$-presentation for $L^*$.

\subsection{Computing Lie $p$-covers in the symbolic case}
\label{symcov}

We now consider the symbolic setting. Consider a Lie $p$-ring $L$ given by the symbolic Lie $p$-presentation $\P(A)$. We assume that the 
conditions (a) and (b) of Theorem \ref{thm6} hold in $\P(A)$; note that
this is the case for each symbolic Lie $p$-presentation in the 
library of Lie $p$-rings of dimension at most $7$ in the Lie 
$p$-ring package.

The construction described in Section \ref{ordcov} extends naturally to this symbolic setting. We first construct a symbolic Lie $p$-presentation $\widehat{\P}(A)$ for 
$L^*$ by extension of $\P(A)$. Then we evaluate conditions (a) and 
(b) of Theorem \ref{thm6}. Since these relations vanish in $L$, each consistency condition determines an element of the multiplicator $M(L)$. Hence we obtain a collection of elements $v_1, \ldots, v_k$ in $M(L)$, where each $v_i$ is expressed as a linear combination of the tail generators. Writing these elements with respect to the chosen basis of $M(L)$ gives the matrix $M$.

We now apply the Hermite normal form algorithm described in Section \ref{hnf},
and thus determine a finite set $\{(M_i, U_i, Z_i) \mid i \in I\}$
describing the different row reduced echelon forms arising for 
$\langle v_1, \ldots, v_k \rangle$. For each one of the matrix $M_i$, the
row reduced echelon form allows to compute the quotient $L^*/\langle M_i \rangle$. In this way we obtain a finite list of consistent symbolic Lie $p$-presentations
of the Lie $p$-covers of $L$.

\begin{example}
Let $L$ be a Lie $p$-ring of order $p^5$ specified by the consistent 
symbolic Lie $p$-presentation $\mathcal{P}(A)$ with the non-trivial 
defining relations:
\[
pl_1=l_4,\quad
pl_2=xl_5,\quad
[l_1,l_2]=-l_3,\quad
[l_1,l_3]=-l_4,\quad
[l_2,l_3]=-l_5.
\]
In this case, $R_\ell=\Z[1/\ell][w,x]$, $U=\langle x\rangle$, and $Z=\{0\}$. Following the construction of Section~\ref{ordcov}, we obtain the symbolic Lie $p$-presentation $\widehat{\mathcal{P}}(A)$ by adjoining central tail generators $t = \{t_{i,j} \mid 1 \leq i \leq j \leq 5\}$ (set $t_{1,2}= t_{1,3}= t_{2,3}=0$) satisfying $[k, t_{i,j}] = 0$ for all $k \in \{l_1,\ldots, l_5\} \cup t$ and $pt_{i,j} = 0$: 
\begin{align*}
pl_1 &= l_4+t_{1,1} & [l_1, l_2] &= -l_3 & [l_2, l_4] &= t_{2,4} \\
pl_2 &= xl_5+t_{2,2} & [l_1, l_3] &= -l_4 & [l_2, l_5] &= t_{2,5} \\
pl_3 &= t_{3,3} & [l_1, l_4] &= t_{1,4} & [l_3, l_4] &= t_{3,4} \\
p l_4 &= t_{4,4} & [l_1, l_5] &= t_{1,5} & [l_3, l_5] &= t_{3,5} \\
p l_5 &= t_{5,5} & [l_2, l_3] &= -l_5  &[l_4, l_5] &= t_{4,5} &
\end{align*}
The consistency conditions of $\widehat{\mathcal{P}}(A)$ are obtained from 
the relations $RF(P(i,j)) = 0$ and the Jacobi identities 
$RF(J(i, j, k)) = 0$ for all $1 \leq i, j, k \leq 5$. These determine the 
following elements $v_1,\ldots,v_{13}$ in $M(L)$:
\[
\begin{aligned}
v_1&=P(1,1)= t_{14}, & v_2&=P(1,2)=t_{2,4}-t_{3,3}\\
v_3&=P(1,3)=t_{3,4}-t_{4,4},&v_4&=P(1,5)=J(5,3,1)=-J(4,3,2)=-t_{4,5}\\
v_5&=P(2,1)=t_{3,3}+xt_{1,5}, & v_6&=P(2,2)=xt_{2,5}\\
v_7&=P(2,3)= xt_{3,5}-t_{5,5}, &
v_8&=P(2,4)= xt_{4,5}\\
v_9&=P(3,1)=t_{4,4},&
v_{10}&=P(3,2)=t_{5,5}\\
v_{11}&=J(3,2,1)=t_{1,5}-t_{2,4},&
v_{12}&=J(4,2,1)=-t_{3,4}\\
v_{13}&=J(5,2,1)=-t_{3,5}.
\end{aligned}
\]
With respect to the ordered basis $\{t_{3,3}, t_{4,4}, t_{5,5}, t_{1,5}, t_{2,4}, t_{3,4}, t_{3,5}, t_{4,5}, t_{1,4}, t_{2,5}\}$, we obtain the following matrix $M$: 
\[
M = \left( \begin{array}{cccccccccc}
t_{3,3} & t_{4,4} & t_{5,5} & t_{1,5} & t_{2,4} & t_{3,4} & t_{3,5} & t_{4,5}&t_{1,4}&t_{2,5} \\ \hline
-1 & 0 & 0 & 0 & 1 & 0 & 0 & 0 &0&0 \\
0 & -1 & 0 & 0 & 0 & 1 & 0 & 0 &0&0 \\
0 & 0 & 1 & 0 & 0 & 0 & 0 &0&0 &0\\
0 & 0 & 0 & 1 & -1 & 0 & 0 & 0 &0&0\\
0 & 0 & 0 & 0& 0 & -1 & 0 & 0 &0&0\\
0 & 0 & 0 & 0& 0 & 0 & -1 & 0 &0&0\\
0 & 0 & 0 & 0& 0 & 0 & 0 & -1 &0&0\\
0 & 0 & 0 & 0& 0 & 0 & 0 & 0 &1&0\\
1 & 0 & 0 & x & 0 & 0 & 0 & 0&0&0 \\
0 & 0 & 0 & 0 & 0 & 0 & 0 & 0&0&x \\
0 & 0 & 0 & 0 & 0 & 0 & 0 & x&0&0 \\
0 & 1 & 0 & 0 & 0 & 0 & 0 & 0 &0&0\\
0 & 0 & -1 & 0 & 0 &0& x & 0  &0&0
\end{array} \right)
\]
Applying the row-reduction algorithm of Section~\ref{hnf} to $M$ produces the following matrix:
\[ \left( \begin{array}{cccccccccc}
t_{3,3} & t_{4,4} & t_{5,5} & t_{1,5} & t_{2,4} & t_{3,4} & t_{3,5} & t_{4,5}&t_{1,4}&t_{2,5} \\ \hline
1 & 0 & 0 & 0 & -1 & 0 & 0 & 0 &0&0 \\
0 & 1 & 0 & 0 & 0 & 0 & 0 & 0 &0&0 \\
0 & 0 & 1 & 0 & 0 & 0 & 0 &0&0 &0\\
0 & 0 & 0 & 1 & -1 & 0 & 0 & 0 &0&0\\
0 & 0 & 0 & 0 & 1+x & 0 & 0 & 0&0&0 \\
0 & 0 & 0 & 0& 0 & 1 & 0 & 0 &0&0\\
0 & 0 & 0 & 0& 0 & 0 & 1 & 0 &0&0\\
0 & 0 & 0 & 0& 0 & 0 & 0 & 1 &0&0\\
0 & 0 & 0 & 0& 0 & 0 & 0 & 0 &1&0\\
0 & 0 & 0 & 0 & 0 & 0 & 0 & 0&0&x \\
0 & 0 & 0 & 0 & 0 & 0 & 0 & 0&0&0 \\
0 & 0 & 0 & 0 & 0 & 0 & 0 & 0 &0&0\\
0 & 0 & 0 & 0 & 0 &0& 0 & 0  &0&0
\end{array} \right)
\]

\textbf{Case 1:} In this case, $\ell_1=\ell, \;Z_1=\{0\}$, and modify $U_1=\langle x,x+1\rangle$. Since $x$ and $x+1$ are invertible in $R_{\ell_1}(U_1)$, the algorithm yields $M_1=\begin{pmatrix}
    I_{10}
\end{pmatrix}$, where $I_{10}$ denotes the $10\times 10$ identity matrix. Hence all tails occurring in the matrix $M_1$ vanish. Thus, the basis for the quotient $L^{*} / \langle M_1\rangle$ is $\{\overline{t}_{1,1}, \overline{t}_{2,2}\}$, where  
$\overline{t}_{i,i}$ denotes the image of $t_{i,i}$ in the quotient. Writing
$l_6=\overline{t}_{1,1},\;
l_7=\overline{t}_{2,2}$, the resulting consistent symbolic Lie $p$-presentation $\widehat{\mathcal{P}}(A)$ is \[
pl_1=l_4+l_6,\;
pl_2=-l_5+l_7,\;
[l_1,l_2]=-l_3,\;
[l_1,l_3]=-l_4,\;
[l_2,l_3]=-l_5.
\]

\textbf{Case 2:} In this case, $\ell_2=\ell$, $U_2=\langle x \rangle$, and modify $Z_2=\langle Z, x+1\rangle=\langle x+1\rangle$. Reducing the matrix modulo $Z_2$, we get \[M_2= \left( \begin{array}{cccccccccc}
t_{3,3} & t_{4,4} & t_{5,5} & t_{1,5} & t_{2,4} & t_{3,4} & t_{3,5} & t_{4,5}&t_{1,4}&t_{2,5} \\ \hline
1 & 0 & 0 & 0 & -1 & 0 & 0 & 0 &0&0 \\
0 & 1 & 0 & 0 & 0 & 0 & 0 & 0 &0&0 \\
0 & 0 & 1 & 0 & 0 & 0 & 0 &0&0 &0\\
0 & 0 & 0 & 1 & -1 & 0 & 0 & 0 &0&0\\
0 & 0 & 0 & 0 & 0 & 0 & 0 & 0&0&0 \\
0 & 0 & 0 & 0& 0 & 1 & 0 & 0 &0&0\\
0 & 0 & 0 & 0& 0 & 0 & 1 & 0 &0&0\\
0 & 0 & 0 & 0& 0 & 0 & 0 & 1 &0&0\\
0 & 0 & 0 & 0& 0 & 0 & 0 & 0 &1&0\\
0 & 0 & 0 & 0 & 0 & 0 & 0 & 0&0&-1 \\
0 & 0 & 0 & 0 & 0 & 0 & 0 & 0&0&0 \\
0 & 0 & 0 & 0 & 0 & 0 & 0 & 0 &0&0\\
0 & 0 & 0 & 0 & 0 &0& 0 & 0  &0&0
\end{array} \right)
\]
Here, $t_{3,3}=t_{1,5}=t_{2,4}$, while all other tails appearing in $M_2$ matrix vanish. Thus, a basis for the quotient $L^{*} / \langle M_2\rangle$ is
$\{\overline{t}_{1,1}, \overline{t}_{2,2}, \overline{t}_{2,4}\}$.
Equivalently, one may choose $\overline{t}_{3,3}$ or $\overline{t}_{1,5}$ as the basis element, since
$\overline{t}_{3,3}
=
\overline{t}_{1,5}
=
\overline{t}_{2,4}$
in the quotient $L^{*} / \langle M_2\rangle$. Setting $l_6=\overline{t}_{1,1},\;
l_7=\overline{t}_{2,2},\;
l_8=\overline{t}_{2,4}$, the presentation $\widehat{\mathcal{P}}(A)$ simplifies to
\[
pl_1=l_4+l_6,\;
pl_2=-l_5+l_7,\;
pl_3=l_8,\]
\[
[l_1,l_2]=-l_3,\;
[l_1,l_3]=-l_4,\;
[l_2,l_3]=-l_5,\;
[l_2,l_4]=l_8,\;
[l_1,l_5]=l_8.
\]
\end{example}

\subsection*{Acknowledgment} Renu Joshi acknowledges support from the Alexander von Humboldt Foundation through the Humboldt Research Fellowship for Postdocs (Humboldt ID number: 1243434).

\smallskip

\noindent \textbf{Statements and Declarations.}

\smallskip

\noindent  {\bf Data availability} Data sharing does not apply to this article as no datasets were analyzed during its preparation.\\
\noindent   {\bf Conflict of interests} The authors affirm that they have no financial interests to disclose.

\providecommand{\bysame}{\leavevmode\hbox to3em{\hrulefill}\thinspace}
\providecommand{\MR}{\relax\ifhmode\unskip\space\fi MR }
\providecommand{\MRhref}[2]{%
  \href{http://www.ams.org/mathscinet-getitem?mr=#1}{#2}
}
\providecommand{\href}[2]{#2}


\begin{thebibliography}{1}

\bibitem{Gr00}
Willem~A. de~Graaf, \emph{Lie algebras: theory and algorithms}, North-Holland
  Mathematical Library, vol.~56, North-Holland Publishing Co., Amsterdam, 2000.

\bibitem{VLE15}
B.~Eick and M.~Vaughan-Lee, \emph{{LiePRing} -- {D}atabase and {A}lgorithms for
  {L}ie p-rings}, 2015, GAP 4 package.

\bibitem{GAP}
The GAP~Group, \emph{{GAP -- Groups, Algorithms, and Programming, Version
  4.15.1}}, 2025.

\bibitem{Hig60}
Graham Higman, \emph{Enumerating {$p$}-groups. {II}. {P}roblems whose solution
  is {PORC}}, Proc. London Math. Soc. (3) \textbf{10} (1960), 566--582.

\bibitem{Hup67}
B.~Huppert, \emph{Endliche {G}ruppen. {I}}, Die Grundlehren der mathematischen
  Wissen\-schaf\-ten, vol. 134, Springer-Verlag, Berlin-New York, 1967.

\bibitem{OBr90}
E.~A. O'Brien, \emph{The {$p$}-group generation algorithm}, J. Symbolic Comput.
  \textbf{9} (1990), no.~5-6, 677--698, Computational group theory, Part 1.

\bibitem{OBVL05}
E.~A. O'Brien and M.~R. Vaughan-Lee, \emph{The groups with order {$p^7$} for
  odd prime {$p$}}, J. Algebra \textbf{292} (2005), no.~1, 243--258.

\end{thebibliography}
\end{document}